\newtheorem{theorem}{Theorem}[section]
\newtheorem{lemma}[theorem]{Lemma}
\newcommand{\cN}{{\mathcal N}}
\newcommand{\cP}{{\mathcal P}}
\newcommand{\cT}{{\mathcal T}}
\newcommand{\lsa}{{\rm lsa}}
\newcommand{\blue}{\textcolor{black}}
\begin{document}
	
\title{Trinets Encode Orchard Phylogenetic Networks}
	
\thanks{The first author was supported by the New Zealand Marsden Fund} 
	
\author{Charles Semple}
\address{School of Mathematics and Statistics, University of Canterbury, Christchurch, New Zealand}
\email{charles.semple@canterbury.ac.nz}
	
\author{Gerry Toft}
\address{School of Mathematics and Statistics, University of Canterbury, Christchurch, New Zealand}
\email{gerry.toft@pg.canterbury.ac.nz}
	
\keywords{Level-$k$ networks, tree-child networks, orchard networks, trinets}
	
\subjclass{05C85, 68R10}
	
\date{\today}
	
\begin{abstract}
Rooted triples, rooted binary phylogenetic trees on three leaves, are sufficient to encode rooted binary phylogenetic trees. That is, if $\cT$ and $\cT'$ are rooted binary phylogenetic $X$-trees \blue{that infers} the same set of rooted triples, then $\cT$ and $\cT'$ are isomorphic. However, in general, this sufficiency does not extend to rooted binary phylogenetic networks. In this paper, we show that trinets, phylogenetic network analogues of rooted triples, are sufficient to encode rooted binary orchard networks. Rooted binary orchard networks naturally generalise rooted binary tree-child networks. Moreover, we present a polynomial-time algorithm for building a rooted binary orchard network from its set of trinets. As a consequence, this algorithm affirmatively answers a previously-posed question of whether there is a polynomial-time algorithm for building a rooted binary tree-child network from the set of trinets it infers.
\end{abstract}

\maketitle

\section{Introduction}

The evolutionary relationships of a collection of present-day species are typically represented by a rooted phylogenetic (evolutionary) tree. Over recent decades, a wide variety of methods for building rooted phylogenetic trees from genomic data have been developed~\cite{fel04} and these methods are routinely used by computational biologists. However, it is now well recognised~\cite{hus10} that, as the result of non-treelike (reticulate) processes, rooted phylogenetic networks provide a more accurate representation of the evolutionary relationships for many such collections. These processes include hybridisation and lateral gene transfer. Consequently, a central current task in computational biology is the development of methods for building rooted phylogenetic networks.

A canonical and practical approach to building rooted phylogenetic networks is to amalgamate smaller networks (or trees) on overlapping leaf sets into a single rooted phylogenetic network. In the context of building rooted phylogenetic trees, which amalgamate smaller trees, these approaches are collectively called supertree methods and they have been very successful in the inference of rooted phylogenetic trees (for example, see~\cite{bin04}). A desirable property of any supertree method is that if the smaller trees are consistent, then the returned supertree infers each of the smaller trees. The theorem that underlies this property is the following. Loosely speaking, a set $\cP$ of (smaller) rooted phylogenetic networks ``encodes'' a rooted phylogenetic network $\cN$ if $\cN$ is the only rooted phylogenetic network to infer each of the networks in $\cP$. A rooted \blue{binary} phylogenetic tree is encoded by the set of all rooted triples it infers, \blue{where a rooted triple is a rooted binary phylogenetic tree on three leaves} (see, for example, \cite{aho81, sem03}). In this paper, we are interested in analogues of this theorem for phylogenetic networks.

With a necessary mild restriction, Gambette and Huber~\cite{gam12} showed that rooted binary level-$1$ networks, that is, rooted binary phylogenetic networks whose underlying cycles are vertex disjoint, are encoded by the set of all rooted triples they infer. However, this result does not generalise to rooted binary level-$2$ networks~\cite{gam12}. On the other hand, generalising level-$1$ networks in a different direction, Linz and Semple~\cite{lin20} showed recently that rooted binary normal networks~\cite{wil10}, while not encoded by the set of rooted triples they infer, are encoded by the set of all rooted binary caterpillars on three and four leaves they infer. This improves upon a result of Willson~\cite{wil11} who showed that a rooted binary normal network on $n$ leaves is encoded by the set of all rooted phylogenetic trees on $n$ leaves it infers. Note that a rooted caterpillar on three leaves is the same as a rooted triple. However, analogous results for rooted binary tree-child networks~\cite{car09}, a slight generalisation of rooted binary normal networks, do not hold. In particular, rooted binary tree-child networks on $n$ leaves are not necessarily encoded by the set of all rooted binary phylogenetic trees on $n$ leaves they infer (for \blue{an} example, see~\cite{lin20}). Thus, if we want to build the correct rooted phylogenetic network using a supertree-type approach, doing so with trees is limiting.

As a consequence of the work in~\cite{gam12}, Huber and Moulton~\cite{hub13} considered building rooted phylogenetic networks from smaller networks, in particular, rooted phylogenetic networks on three leaves which they called trinets. In contrast to above, van Iersel and Moulton~\cite{ier14} showed that rooted binary level-$2$ networks as well as rooted binary tree-child networks are encoded by the set of all trinets they infer. In this paper, we generalise this result for tree-child networks to the recently introduced class of rooted binary orchard networks~\cite{erd19, jan18}. That is, we show that a rooted binary orchard network is encoded by the set of trinets it infers. Unlike rooted binary tree-child networks whose total number of vertices is bounded linearly in the size of its leaf set~\cite{car09}, for a fixed set of leaves, the total number of vertices in a rooted binary orchard network can be arbitrarily large. Also, for any positive integer $k$, there exists a rooted binary orchard network whose level is at least~$k$. Furthermore, we present a polynomial-time algorithm for \blue{building} a rooted binary orchard networks from the set of trinets it infers. \blue{As a consequence,} this answers a question of van Iersel and Moulton~\cite{ier14} of whether it is possible to build a rooted binary tree-child network from the set of trinets it infers in polynomial time.
We next formally state the main result of the paper.

Throughout the paper, $X$ denotes a non-empty finite set and all paths are directed.

\noindent{\bf Phylogenetic networks.} A {\em binary phylogenetic network $\cN$ on $X$} is a rooted acyclic directed graph with no arcs in parallel satisfying the following properties:
\begin{enumerate}[{\rm (i)}]
\item the (unique) root has in-degree zero and out-degree two;
		
\item the set of vertices with out-degree zero is $X$ and all such vertices have in-degree one;
		
\item all other vertices either have in-degree one and out-degree two, or in-degree two and out-degree one.
\end{enumerate}
Additionally, if $|X|=1$, we allow $\cN$ to consist of the single vertex in $X$. The vertices in $X$ are called {\em leaves}, and so we refer to $X$ as the {\em leaf set} of $\cN$. Furthermore, vertices of in-degree one and out-degree two are {\em tree vertices}, while vertices of in-degree two and out-degree one are {\em reticulations}. Arcs directed into a reticulation are called {\em reticulation arcs}. If $\cN$ has no reticulations, then $\cN$ is a {\em rooted binary phylogenetic $X$-tree}. Since we only consider rooted binary phylogenetic trees and binary phylogenetic networks, we will abbreviate such trees and networks to rooted phylogenetic trees and phylogenetic networks, respectively. To illustrate, a phylogenetic network $\cN_1$ on $\{x_1, x_2, \ldots, x_6\}$ is shown in Fig.~\ref{orchard}. In this figure, as in all other figures in the paper, all arcs are directed down the page.

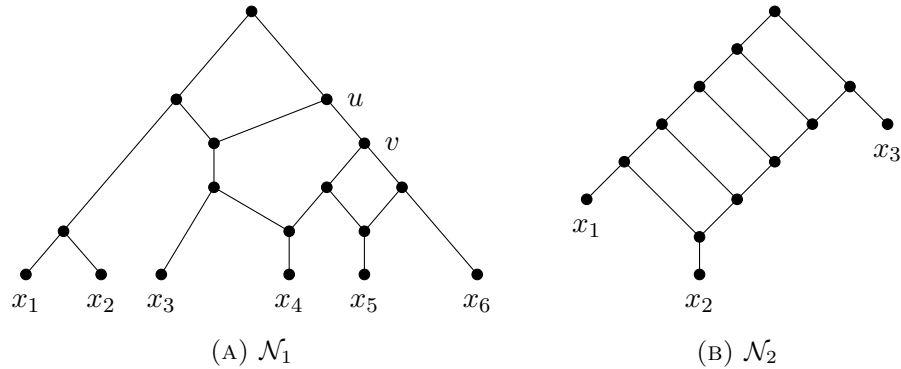
\begin{figure}
\centering
\begin{subfigure} {0.6\textwidth}
\centering
\begin{tikzpicture}
\draw (0,0) -- (-3,-3.5);
\draw (0,0) -- (3,-3.5);
\draw (-2.5,-2.925) -- (-2,-3.5);
\draw (-1,-1.17) -- (-0.5,-1.755);
\draw (1,-1.17) -- (-0.5,-1.755) -- (-0.5,-2.34) -- (-1.2,-3.5);
\draw (1.5,-1.755) -- (0.5,-2.925) -- (0.5,-3.5);
\draw (-0.5,-2.34) -- (0.5,-2.925);
\draw (1,-2.34) -- (1.5,-2.925) -- (1.5,-3.5);
\draw (2,-2.34) -- (1.5,-2.925);

\filldraw[black] (0,0) circle (2pt);
\filldraw[black] (-3,-3.5) circle (2pt) node [label=below:$x_1$] {};
\filldraw[black] (3,-3.5) circle (2pt) node [label=below:$x_6$] {};
\filldraw[black] (-2.5,-2.925) circle (2pt);
\filldraw[black] (-2,-3.5) circle (2pt) node [label=below:$x_2$] {};
\filldraw[black] (-1,-1.17) circle (2pt);
\filldraw[black] (-0.5,-1.755) circle (2pt);
\filldraw[black] (1,-1.17) circle (2pt) node [label=right:$u$] {};
\filldraw[black] (-0.5,-2.34) circle (2pt);
\filldraw[black] (-1.2,-3.5) circle (2pt) node [label=below:$x_3$] {};
\filldraw[black] (1.5,-1.755) circle (2pt) node [label=right:$v$] {};
\filldraw[black] (0.5,-2.925) circle (2pt);
\filldraw[black] (0.5,-3.5) circle (2pt) node [label=below:$x_4$] {};
\filldraw[black] (1,-2.34) circle (2pt);
\filldraw[black] (1.5,-2.925) circle (2pt);
\filldraw[black] (1.5,-3.5) circle (2pt) node [label=below:$x_5$] {};
\filldraw[black] (2,-2.34) circle (2pt);
\end{tikzpicture}
\subcaption{$\cN_1$}
\end{subfigure}
\hfill%
\begin{subfigure} {0.38\textwidth} 
\centering	
\begin{tikzpicture}
\draw (0,0) -- (-2.5,-2.5);
\draw (0,0) -- (1.5,-1.5);
\draw (1,-1) -- (-1,-3) -- (-1,-3.5);
\draw (-0.5,-0.5) -- (0.5,-1.5);
\draw (-1,-1) -- (0,-2);
\draw (-1.5,-1.5) -- (-0.5,-2.5);
\draw (-2,-2) -- (-1,-3);

\filldraw[black] (0,0) circle (2pt);
\filldraw[black] (-0.5,-0.5) circle (2pt);
\filldraw[black] (-1,-1) circle (2pt);
\filldraw[black] (-1.5,-1.5) circle (2pt);
\filldraw[black] (-2,-2) circle (2pt);
\filldraw[black] (-2.5,-2.5) circle (2pt) node [label=below:$x_1$] {};
\filldraw[black] (-1,-3.5) circle (2pt) node [label=below:$x_2$] {};
\filldraw[black] (-1,-3) circle (2pt);
\filldraw[black] (-0.5,-2.5) circle (2pt);
\filldraw[black] (0,-2) circle (2pt);
\filldraw[black] (0.5,-1.5) circle (2pt);
\filldraw[black] (1,-1) circle (2pt);
\filldraw[black] (1.5,-1.5) circle (2pt) node [label=below:$x_3$] {};
\end{tikzpicture}
\subcaption{$\cN_2$}
\end{subfigure}
\caption{A phylogenetic network $\cN_1$ on $\{x_1, x_2, \ldots, x_6\}$ and a phylogenetic network $\cN_2$ on $\{x_1, x_2, x_3\}$. \blue{In $\cN_1$, both $u$ and $v$ are stable ancestors of $\{x_5, x_6\}$.}}
\label{orchard}
\end{figure}

Let $\cN$ be a phylogenetic network on $X$. If $u$ and $v$ are vertices of $\cN$ and there is a path from $u$ to $v$, we say $u$ is an {\em ancestor} of $v$ or, equivalently, $v$ is a {\em descendant} of $u$. Note that every vertex is an ancestor, and thus a descendant, of itself. Furthermore, if $|X|\ge 2$, for each leaf $x\in X$, we denote the (unique) parent of $x$ by $p_x$.

Let $\cN_1$ and $\cN_2$ be two phylogenetic networks on $X$ with vertex and arc sets $V_1$ and $E_1$, and $V_2$ and $E_2$, respectively. We say $\cN_1$ is {\em isomorphic} to $\cN_2$ if there exists a bijection $\varphi: V_1\rightarrow V_2$ such that $\varphi(x)=x$ for all $x\in X$, and $(u, v)\in E_1$ if and only if $(\varphi(u), \varphi(v))\in E_2$ for all $u, v\in V_1$.

\noindent {\bf Orchard networks.} Let $\cN$ be a phylogenetic network on $X$. Let $\{a, b\}$ be a $2$-element subset of $X$. We say $\{a, b\}$ is a {\em cherry} if $p_a=p_b$. Furthermore, $(a, b)$ is a {\em reticulated cherry} if $(p_a, p_b)$ is a \blue{reticulation} arc of $\cN$ where, necessarily, $p_a$ is a tree vertex and $p_b$ is a reticulation. The arc $(p_a, p_b)$ is called the {\em reticulation arc} of $(a, b)$. As an example, consider the phylogenetic network $\cN_1$ shown in Fig.~\ref{orchard}. The set $\{x_1, x_2\}$ is a cherry, while $(x_3, x_4)$ and $(x_6, x_5)$ are reticulated cherries of $\cN_1$. We next describe two reduction operations associated with cherries and reticulated cherries. First, suppose that $\{a, b\}$ is a cherry of $\cN$. Let $\cN'$ be the phylogenetic network on $X-\{b\}$ obtained from $\cN$ by deleting $b$ and its incident arc, and suppressing the resulting degree-two vertex $p_a$. We say that $\cN'$ has been obtained from $\cN$ by {\em reducing $b$}. Note that if $p_a$ is the root of $\cN$, the operation of reducing $b$ corresponds to replacing $\cN$ with the phylogenetic network consisting of the single vertex $a$. Second, suppose that $(a, b)$ is a reticulated cherry of $\cN$. Now let $\cN'$ be the phylogenetic network on $X$ obtained from $\cN$ by deleting $(p_a, p_b)$ and suppressing the two resulting degree-two vertices $p_a$ and $p_b$. We say $\cN'$ has been obtained from $\cN$ by {\em cutting $(a, b)$}. For ease of reading, we sometimes refer to these operations as {\em picking a cherry} or {\em picking a reticulated cherry}, respectively.

A phylogenetic network $\cN$ is {\em orchard} if there is a sequence
$$\cN=\cN_0, \cN_1, \cN_2, \ldots, \cN_k$$
of phylogenetic networks such that, for each $i\in \{1, 2, \ldots, k\}$, the phylogenetic network $\cN_i$ is obtained from $\cN_{i-1}$ by either reducing a leaf of a cherry or cutting a reticulated cherry, and $\cN_k$ consists of a single vertex. It is easily checked that both $\cN_1$ and $\cN_2$ in Fig.~\ref{orchard} are orchard networks. For $\cN_2$, we can obtain a sequence by repeatedly cutting the reticulated cherry $(x_1, x_2)$ until there are no more reticulations, and then reducing $x_3$ of the cherry $\{x_2, x_3\}$, and reducing $x_2$ of the cherry $\{x_1, x_2\}$. It may appear that the order in which we pick a cherry or a reticulated cherry is important, but this is not the case as the following lemma~\cite{erd19, jan18} shows.

\begin{lemma}
Let $\cN$ be an orchard network, and suppose that $\cN'$ is obtained from $\cN$ by picking either a cherry or a reticulated cherry. Then $\cN'$ is an orchard network.
\label{order}
\end{lemma}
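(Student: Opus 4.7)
The plan is to induct on $|V(\cN)|$. The base case $|V(\cN)|=1$ is vacuous, since a single-vertex network has no cherry or reticulated cherry to pick. For the inductive step, fix an orchard reduction sequence $\cN=\cN_0, \cN_1, \ldots, \cN_k$ of $\cN$, and let $\tau$ be the first operation in the sequence, so $\cN_1=\tau(\cN)$. Let $\sigma$ denote the operation producing $\cN'$. If $\sigma=\tau$ then $\cN'=\cN_1$, and the remainder of the given sequence shows $\cN'$ is orchard, so assume $\sigma\neq\tau$.

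The heart of the argument is a case analysis based on how the 2-element leaf sets $S$ and $T$ underlying $\sigma$ and $\tau$ interact, combined with their types (cherry or reticulated cherry). Using the in-/out-degree constraints in the definition of a phylogenetic network, together with the incompatibility of a vertex being simultaneously a tree vertex and a reticulation, all but three configurations are ruled out.

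In the first configuration, $S\cap T=\emptyset$; then the local modifications of $\sigma$ and $\tau$ touch vertex-disjoint parts of $\cN$, so the operations commute in the sense that $\sigma$ remains valid on $\cN_1$, $\tau$ remains valid on $\cN'$, and $\sigma(\cN_1)=\tau(\cN')$. Since $\cN_1$ is orchard with $|V(\cN_1)|<|V(\cN)|$, the inductive hypothesis applied to $\cN_1$ with operation $\sigma$ yields that $\sigma(\cN_1)$ is orchard, hence so is $\tau(\cN')$, and therefore $\cN'$ is orchard. In the second configuration, $|S\cap T|=1$, and the forbidden-case analysis forces $\sigma$ and $\tau$ to be reticulated-cherry cuts of the form $(a,b)$ and $(c,b)$, where $b$ is a reticulation whose two incoming arcs come from $p_a$ and $p_c$. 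Here the degree-two suppressions triggered by $\sigma$ (respectively $\tau$) create an ordinary cherry $\{b,c\}$ in $\cN'$ (respectively $\{a,b\}$ in $\cN_1$); reducing $b$ from these cherries in the two networks yields isomorphic results, so applying the inductive hypothesis to $\cN_1$ with that cherry reduction collapses this configuration into the conclusion of the first. In the third configuration, $S=T$, which forces both $\sigma$ and $\tau$ to be cherry reductions on the same cherry $\{a,b\}$, removing $b$ and $a$ respectively. Then $\cN'$ and $\cN_1$ are identical as unlabelled rooted digraphs, differing only in whether the surviving leaf above $p_a=p_b$ is labelled $a$ or $b$, and since whether a network is orchard depends only on its structure (cherries and reticulated cherries are detected locally, without reference to specific leaf labels), $\cN_1$ being orchard immediately implies $\cN'$ is orchard.

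The main obstacle will be the shared-reticulation configuration: unlike the disjoint case, the operations do not commute in the straightforward sense, because each cut destroys the very reticulated-cherry structure that made the other operation applicable, replacing it by an ordinary cherry. Verifying the relevant isomorphism requires careful tracking of which degree-two vertices get suppressed and where the surviving leaves reattach, but once the cherry created by the suppressions is identified the argument funnels back into the disjoint case.
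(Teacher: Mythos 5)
The paper does not actually prove Lemma~\ref{order}; it is stated without proof and attributed to the references \cite{erd19, jan18}, so there is no in-paper argument to compare against. Your proposal supplies a correct, self-contained proof, and it follows the same exchange-style induction used in those references. The case analysis is sound: the degree constraints do rule out every overlap pattern except (a) disjoint leaf pairs, (b) two reticulated cherries $(a,b)$ and $(c,b)$ sharing the reticulation leaf $b$, and (c) two reductions of the same cherry; and your treatment of (b) — that each cut converts the other reticulated cherry into an ordinary cherry containing $b$, and that reducing $b$ from either resulting network yields the same network, to which the inductive hypothesis applies via $\cN_1$ — is exactly the crux and is verified correctly (after cutting $(a,b)$ the arc $(p_c,b)$ appears, after cutting $(c,b)$ the arc $(p_a,b)$ appears, and deleting $b$ then suppresses $p_c$, respectively $p_a$, giving identical results). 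The only place the write-up is thinner than it should be is case (a): ``touch vertex-disjoint parts'' needs the short verification that the parent vertices $p_a,p_b,p_c,p_d$ are pairwise distinct (a cherry parent has two leaf children, a reticulation parent has a unique leaf child, and the tree vertex of a reticulated cherry has a reticulation child, so none can coincide when the leaf pairs are disjoint), which is what guarantees that neither operation destroys the other's cherry or reticulated-cherry structure and that the two orders produce the same network. With that sentence added, the proof is complete.
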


Orchard networks were introduced independently in~\cite{erd19} and~\cite{jan18}, and generalise the more familiar class of tree-child networks. A phylogenetic network is {\em tree-child} if every non-leaf vertex is the parent of a tree vertex or a leaf~\cite{car09}. However, not all phylogenetic networks are orchard. For example, neither of the two phylogenetic networks shown in Fig.~\ref{counterexample} is orchard.

\noindent {\bf Trinets.} A {\em trinet} is a phylogenetic network on three leaves. Observe that trinets generalise the more familiar concept of {\em rooted triples}, rooted \blue{(binary)} phylogenetic trees on three leaves.
	
Let $\cN$ be a phylogenetic network on $X$, and let $X'$ be a subset of $X$. A {\em stable ancestor} of $X'$ is a vertex $u$ of $\cN$ having the property that, for all $x\in X'$, every path from the root of $\cN$ to $x$ traverses $u$. In the literature, a stable ancestor of $X'$ is also referred to as a ``visible'' ancestor of $X'$. Since the root itself satisfies this property, such a vertex always exists. Furthermore, we say $u$ is a {\em lowest stable ancestor} of $X'$ if no distinct stable ancestor of $X'$ is a descendant of $u$. Note that if $u$ and $v$ are stable ancestors of $X'$, then there is either a path from $u$ to $v$, or a path from $v$ to $u$. It follows that the lowest stable ancestor of $X'$ is unique. We denote the lowest stable ancestor of $X'$ by $\lsa(X')$. In Fig.~\ref{orchard}, $u$ and $v$ are stable ancestors of $\{x_5, x_6\}$ in $\cN_1$, but $v$ is the lowest stable ancestor of $\{x_5, x_6\}$ in $\cN_1$.

For a directed graph $G$, the {\em full simplification} of $G$ is the directed graph obtained from $G$ by repeatedly suppressing vertices of in-degree one and out-degree one, and deleting exactly one arc of any pair of arcs in parallel until neither of these operations are applicable. Now, let $\cN$ be a phylogenetic network on $X$, and let $X'$ be a subset of $X$. Suppose that $u$ is the lowest stable ancestor of $X'$. The {\em path graph of $\cN$ on $X'$} is the directed subgraph of $\cN$ obtained by deleting all vertices and arcs not on a path from $u$ to a leaf in $X'$. That is, the path graph of $\cN$ on $X'$ consists of all paths \blue{of $\cN$} starting at $u$ and ending at a vertex in $X'$. The phylogenetic network {\em exhibited by $\cN$ on $X'$} is the full simplification of the path graph of $\cN$ on $X'$. We denote the phylogenetic network exhibited by $\cN$ on $X'$ by $\cN_{X'}$. In the special case $|X'|=3$, this process constructs the {\em trinet exhibited by $\cN$ on $X'$}. The set of all trinets exhibited by $\cN$ is denoted by $Tn(\cN)$. Again consider the phylogenetic network $\cN_1$ shown in Fig.~\ref{orchard}. Noting that the root is the lowest stable ancestor of $\{x_2, x_3, x_4\}$, the path graph of $\cN_1$ on $\{x_2, x_3, x_4\}$ is shown in Fig.~\ref{trinet}(A), while the full simplification of this path graph, that is, the trinet exhibited by $\cN_1$ on $\{x_2, x_3, x_4\}$, is shown in Fig.~\ref{trinet}(B).

\begin{figure}
\centering
\begin{subfigure} {0.4\textwidth}
\centering
\begin{tikzpicture}
\draw (0,0) -- (-2.5,-2.925);
\draw (0,0) -- (1.5,-1.755);
\draw (-2.5,-2.925) -- (-2,-3.5);
\draw (-1,-1.17) -- (-0.5,-1.755);
\draw (1,-1.17) -- (-0.5,-1.755) -- (-0.5,-2.34) -- (-1.2,-3.5);
\draw (1.5,-1.755) -- (0.5,-2.925) -- (0.5,-3.5);
\draw (-0.5,-2.34) -- (0.5,-2.925);

\filldraw[black] (0,0) circle (2pt);
\filldraw[black] (-2.5,-2.925) circle (2pt);
\filldraw[black] (-2,-3.5) circle (2pt) node [label=below:$x_2$] {};
\filldraw[black] (-1,-1.17) circle (2pt);
\filldraw[black] (-0.5,-1.755) circle (2pt);
\filldraw[black] (1,-1.17) circle (2pt);
\filldraw[black] (-0.5,-2.34) circle (2pt);
\filldraw[black] (-1.2,-3.5) circle (2pt) node [label=below:$x_3$] {};
\filldraw[black] (1.5,-1.755) circle (2pt);
\filldraw[black] (0.5,-2.925) circle (2pt);
\filldraw[black] (0.5,-3.5) circle (2pt) node [label=below:$x_4$] {};
\filldraw[black] (1,-2.34) circle (2pt);
\end{tikzpicture}
\subcaption{The path-graph exhibited by $\cN_1$ on $\{x_2,x_3,x_4\}$.}
\end{subfigure}
\hspace{0.05\textwidth}%
\begin{subfigure} {0.4\textwidth}
\centering
\begin{tikzpicture}
\draw (0,0) -- (-1,-1.17);
\draw (0,0) -- (1,-1.17);
\draw (-1,-1.17) -- (-2,-3.5);
\draw (-1,-1.17) -- (-0.5,-1.755);
\draw (1,-1.17) -- (-0.5,-1.755) -- (-0.5,-2.34) -- (-1.2,-3.5);
\draw (1,-1.17) -- (0.5,-2.925) -- (0.5,-3.5);
\draw (-0.5,-2.34) -- (0.5,-2.925);

\filldraw[black] (0,0) circle (2pt);
\filldraw[black] (-2,-3.5) circle (2pt) node [label=below:$x_2$] {};
\filldraw[black] (-1,-1.17) circle (2pt);
\filldraw[black] (-0.5,-1.755) circle (2pt);
\filldraw[black] (1,-1.17) circle (2pt);
\filldraw[black] (-0.5,-2.34) circle (2pt);
\filldraw[black] (-1.2,-3.5) circle (2pt) node [label=below:$x_3$] {};
\filldraw[black] (0.5,-2.925) circle (2pt);
\filldraw[black] (0.5,-3.5) circle (2pt) node [label=below:$x_4$] {};
\end{tikzpicture}
\subcaption{The trinet exhibited by $\cN_1$ on $\{x_2,x_3,x_4\}$.}
\end{subfigure}
\caption{The path graph of the phylogenetic network $\cN_1$, shown in Fig.~\ref{orchard}, on $\{x_2, x_3, x_4\}$, and the trinet exhibited by $\cN_1$ on $\{x_2, x_3, x_4\}$.}
\label{trinet}
\end{figure}
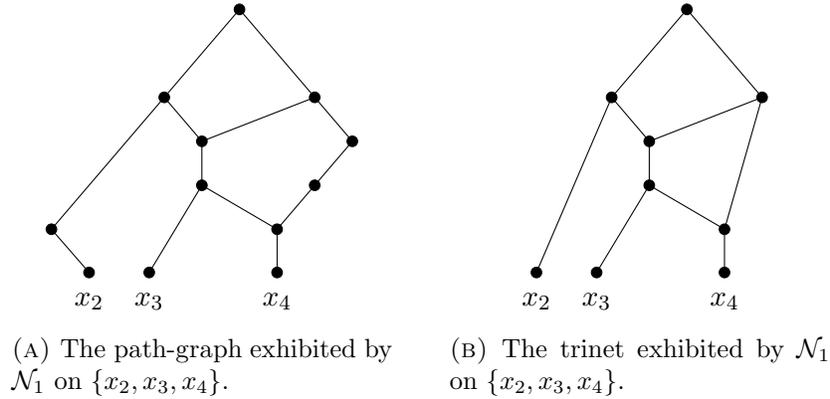

A phylogenetic network $\cN$ on $X$ is {\em recoverable} if it has no arc $(u, v)$ whose deletion disconnects $\cN$ and $v$ is an ancestor of every element in $X$, that is, $v$ is a stable ancestor of $X$ (and $v$ is not the root). Equivalently, $\cN$ is recoverable if $\lsa(X)$ is the root of $\cN$. We say a recoverable phylogenetic network $\cN$ is {\em encoded} by $Tn(\cN)$ if it has the following property: If $\cN'$ is a recoverable phylogenetic network and, \blue{up to isomorphism,} $Tn(\cN)=Tn(\cN')$, then $\cN$ is isomorphic to $\cN'$. Observe that if a phylogenetic network $\cN$ is not recoverable, then $Tn(\cN)$ provides no information of the structure of $\cN$ between the root of $\cN$ and an arc $(u, v)$ whose deletion disconnects $\cN$ and in which every leaf is descendant of $v$.

The next theorem is one of two main results in~\cite{ier14}. It generalises the well-known result mentioned earlier that says a rooted phylogenetic tree $\cT$ is encoded by the set of all rooted triples exhibited by $\cT$ (see, for example, \cite{aho81, sem03}). All tree-child networks are recoverable since, provided the leaf set has size at least two, the root has out-degree two and every non-leaf vertex is the parent of a tree vertex or a leaf.

\begin{theorem}
Let $\cN$ be a tree-child network on $X$, \blue{where $|X|\ge 3$}. Then $Tn(\cN)$ encodes $\cN$.
\label{tree-child}
\end{theorem}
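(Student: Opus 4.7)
The plan is to argue by induction on the number of vertices of $\cN$, with the base case being $|X|=3$. In the base case, $\cN$ is itself a trinet, so up to isomorphism it is the unique element of $Tn(\cN)$; hence any tree-child $\cN'$ with $Tn(\cN)=Tn(\cN')$ must be isomorphic to $\cN$.

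For the inductive step, suppose $\cN$ and $\cN'$ are tree-child networks on $X$ with $Tn(\cN)=Tn(\cN')$. The first key step is to locate, from trinet information alone, a cherry or a reticulated cherry of $\cN$. A standard fact is that every tree-child network on at least two leaves contains a cherry or a reticulated cherry, so such a pair certainly exists in $\cN$. My plan is to characterize at least one such pair purely in terms of trinets: for example, to show that $\{a,b\}$ is a cherry of $\cN$ iff for every $c\in X\setminus\{a,b\}$ the trinet $\cN_{\{a,b,c\}}$ displays $\{a,b\}$ as a cherry, with an analogous condition for reticulated cherries $(a,b)$. Granted such a trinet-local characterization, the assumption $Tn(\cN)=Tn(\cN')$ forces $\cN'$ to admit the same cherry or reticulated cherry.

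Next, I would pick the detected pair in both $\cN$ and $\cN'$, producing tree-child networks $\cN^-$ and $(\cN')^-$, each with strictly fewer vertices. The plan is then to show $Tn(\cN^-)=Tn((\cN')^-)$: for each three-element subset $Y$ of the leaf set of $\cN^-$, one must verify that $\cN^-_Y$ is determined by $Tn(\cN)$, typically by relating it to $\cN_Y$ or to a nearby trinet of $\cN$ via a case analysis on how the pick interacts with $Y$. The inductive hypothesis then yields $\cN^-\cong (\cN')^-$. Finally, since a pick is reversible once one knows which leaves participated and the local structure at the reduction site in $\cN^-$ (both of which should be recoverable from the trinet data), the isomorphism lifts to give $\cN\cong \cN'$.

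The main obstacle I anticipate is the cherry-detection step. Verifying that a trinet-local condition is faithful to the global cherry structure is delicate because the full simplification used to form a trinet can manufacture cherries that do not exist in $\cN$. One must therefore either prove that such phantom cherries cannot appear in \emph{every} trinet containing the pair $\{a,b\}$, or replace the naive characterization with a more nuanced one that rules them out; the tree-child hypothesis is exactly what I expect to exploit here, since it forces every non-leaf vertex to have a tree-vertex or leaf child and so rigidly constrains the trinet structure near a lowest reticulation. A secondary but nontrivial difficulty is controlling how $Tn(\cN)$ evolves under the pick operation, because cutting a reticulated cherry can simultaneously alter many trinets, and one needs all such alterations to be determined by the identified pair alone.
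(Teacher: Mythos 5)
Your outline follows the same strategy the paper uses for its more general Theorem~\ref{main}(i) (of which Theorem~\ref{tree-child} is the special case for tree-child, hence orchard, networks): detect a cherry or reticulated cherry from trinet-local data, pick it in both networks, show the reduced trinet sets still agree, and induct. Your proposed cherry-detection criterion is exactly Lemma~\ref{trinet_cherry_lemma}, and your worry about ``phantom cherries'' is the right one, although the tree-child hypothesis turns out to be unnecessary there --- the lemma holds for arbitrary phylogenetic networks. However, there are two genuine gaps. First, a quantifier problem: encoding requires that $\cN$ be the unique \emph{recoverable} network with trinet set $Tn(\cN)$, whereas your induction assumes both $\cN$ and $\cN'$ are tree-child, which only proves uniqueness within the tree-child class. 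This is repairable --- since the cherry/reticulated-cherry characterization holds for arbitrary networks, the trinet condition forces the same structure in an arbitrary recoverable $\cN'$, and picking preserves recoverability (Lemma~\ref{cherry_pick_recoverable}) --- but your setup as written does not deliver the stated theorem.

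Second, and more seriously, the step you defer as a ``secondary difficulty'' --- showing that $Tn(\cN^-)$ is determined by $Tn(\cN)$ after cutting a reticulated cherry $(a,b)$ --- is where essentially all of the technical work lies, and your plan contains no mechanism for it. The problematic case is a leaf set $A=\{b,x,y\}$ with $a\notin A$: one must decide whether the reticulation $p_b$ survives as a vertex of $\cN_A$, and if so, which of its two incoming arcs in $\cN_A$ is the one created by the path through $p_a$ and hence must be deleted. Neither question can be answered by looking at $\cN_A$ alone; the paper resolves them by consulting the trinets $\cN_{\{a,b,x\}}$ and $\cN_{\{a,b,y\}}$ (Lemma~\ref{sup_ret_cherry} together with a vertex-marking argument inside Lemma~\ref{ret_construct_trinet}(iv)). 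Without an argument of this kind, the claim $Tn(\cN^-)=Tn((\cN')^-)$ --- and with it the inductive step --- does not go through.
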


The \blue{first part of Theorem~\ref{main}}, the main result of this paper, generalises Theorem~\ref{tree-child} to the class of orchard networks. The second part of Theorem~\ref{main} shows that orchard networks can be reconstructed from the set of trinets they exhibit in polynomial time which, as a consequence, answers a question of~\cite{ier14} \blue{of whether such a reconstruction is possible for tree-child networks}.

\begin{theorem}
Let $\cN$ be an orchard network on $X$, where $|X|\ge 3$. Then
\begin{enumerate}[{\rm (i)}]
\item $Tn(\cN)$ encodes $\cN$, and

\item up to isomorphism, $\cN$ can be reconstructed from $Tn(\cN)$ in time $O(|V|^6)$, where $V$ is the vertex set of $\cN$.
\end{enumerate}
\label{main}
\end{theorem}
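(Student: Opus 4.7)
The plan is to prove both parts of Theorem~\ref{main} simultaneously by strong induction on $|V|$, mirroring the iterative structure of the orchard definition. In the base case $|X|=3$ the only trinet of $\cN$ is $\cN$ itself, so $Tn(\cN)=\{\cN\}$ encodes and reconstructs $\cN$ immediately. In the inductive step the strategy is: identify, purely from $Tn(\cN)$, either a cherry $\{a,b\}$ or a reticulated cherry $(a,b)$ of $\cN$; pick it to obtain $\cN'$; compute $Tn(\cN')$ from $Tn(\cN)$; apply the inductive hypothesis to $\cN'$, which by Lemma~\ref{order} is orchard and has strictly fewer vertices; and finally unpick the reducible pair to recover $\cN$.

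The first key ingredient is a trinet-level characterisation of reducible pairs. I would prove that $\{a,b\}$ is a cherry of $\cN$ if and only if, for every $c\in X\setminus\{a,b\}$, the trinet $\cN_{\{a,b,c\}}\in Tn(\cN)$ displays $\{a,b\}$ as a cherry, and an analogous equivalence for reticulated cherries $(a,b)$, keyed off the $\lsa$-configuration of $\{a,b,c\}$ visible in each trinet on $\{a,b,c\}$. This local characterisation reduces the detection task to checking, for each (ordered or unordered) pair of leaves, just the $O(|X|)$ trinets involving them. Because $\cN$ is orchard, some cherry or reticulated cherry of $\cN$ must exist, and so is detectable from $Tn(\cN)$ alone.

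Given a detected reducible pair, the next step is to show that $Tn(\cN')$ is determined by $Tn(\cN)$. In the cherry case, $Tn(\cN')$ is simply the restriction of $Tn(\cN)$ to triples in $X\setminus\{b\}$. In the reticulated-cherry case the leaf set is unchanged, and one must analyse, triple by triple, how the trinet changes when the reticulation arc of $(a,b)$ is deleted; this change is a local modification computable from $Tn(\cN)$. Encoding then follows by a standard argument: if $\cN''$ is a recoverable phylogenetic network with $Tn(\cN'')=Tn(\cN)$, then the same reducible pair is forced on $\cN''$, picking it produces $\cN''_0$ with $Tn(\cN''_0)=Tn(\cN')$, induction gives $\cN''_0\cong\cN'$, and reversing the (uniquely determined) pick on both sides yields $\cN''\cong\cN$.

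Part~(ii) follows from an iterative algorithm implementing the above: detect a reducible pair by scanning $O(|X|^2)$ candidate pairs against $O(|X|)$ trinets each, perform the pick, and update the trinet set. Since each pick strictly decreases $|V|$, there are $O(|V|)$ iterations, and bookkeeping yields the claimed $O(|V|^6)$ bound. The main obstacle I anticipate lies in the trinet-level characterisation of reducible pairs for orchard (as opposed to tree-child) networks: orchard networks may have arbitrarily deeply nested reticulations and unbounded level, so one cannot rely on a vertex having a tree-child descendant. A careful structural analysis, invoking $\lsa$-vertices inside each trinet together with Lemma~\ref{order} to ensure that picks preserve the orchard property, will be needed to guarantee that the \emph{lowest} reducible pair of $\cN$ always leaves a detectable signature in $Tn(\cN)$.
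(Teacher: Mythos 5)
Your overall strategy --- induct by picking a cherry or reticulated cherry detected from the trinets, recompute the trinet set, apply the inductive hypothesis, and unpick --- is exactly the paper's. The cherry case and the detection criterion (that $\{a,b\}$ is a cherry, or $(a,b)$ a reticulated cherry, of $\cN$ if and only if it is one in every trinet whose leaf set contains $a$ and $b$) also match the paper's Lemma~\ref{trinet_cherry_lemma}; and contrary to your closing worry, this detection step is not where the real difficulty lies --- any reducible pair works, not just a ``lowest'' one, by Lemma~\ref{order}.

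The genuine gap is the sentence ``this change is a local modification computable from $Tn(\cN)$'' in the reticulated-cherry case. For a triple $A=\{b,x,y\}$ with $a\notin A$, the new trinet $\cN'_A$ is \emph{not} determined by $\cN_A$ alone: cutting $(p_a,p_b)$ may leave $\cN_A$ unchanged (when $p_b$ is suppressed in forming $\cN_A$), or it may require deleting one of the two incoming arcs of $p_b$ in $\cN_A$ --- and since $a\notin A$, nothing in $\cN_A$ by itself tells you whether $p_b$ survives, let alone which of its two parent arcs is the image of the path through $p_a$. The paper resolves this by cross-referencing other trinets: Lemma~\ref{sup_ret_cherry} shows $p_b$ is a vertex of $\cN_{\{b,x,y\}}$ if and only if it is a vertex of $\cN_{\{b,x\}}$ or $\cN_{\{b,y\}}$, each of which is computable (via Lemma~\ref{subset_phynet_lemma}) from $\cN_{\{a,b,x\}}$ or $\cN_{\{a,b,y\}}$, where $p_b$ is visible because $(a,b)$ is a reticulated cherry there; and a ``marking'' argument, tracking $p_a$ through two different simplification sequences that both produce $\cN_{\{b,z\}}$, identifies which parent arc of $p_b$ to delete (Lemma~\ref{ret_construct_trinet}(iv)). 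Without an argument of this kind your inductive step cannot be carried out, so this is the piece you must supply. A smaller omission: to apply the inductive hypothesis to the reduced competitor network you must also check that picking preserves recoverability of an arbitrary recoverable network (the paper's Lemma~\ref{cherry_pick_recoverable}).
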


As we show in the next section, like tree-child networks, orchard networks are recoverable. However, unlike tree-child networks whose total number of reticulations is at most linear in the size of their leaf sets, and so the total number of vertices in a tree-child network is bounded (see~\cite{mcd15}), the total number of reticulations in an orchard network is not bounded by the size of its leaf set. For example, by extending $\cN_2$ in Fig.~\ref{orchard} in the obvious way, it follows that, even with three leaves, the total number of reticulations in an orchard network is not bounded. Moreover, this extension also shows that, for each non-negative integer $k$, there exists an orchard network whose level is at least $k$. A phylogenetic network is {\em level-$k$} if each biconnected component contains at most $k$ reticulations.

In addition to Theorem~\ref{tree-child}, the second main result in~\cite{ier14} establishes that recoverable level-$2$ phylogenetic networks are also encoded by their sets of exhibited trinets. These results, together with Theorem~\ref{main}, support the conjecture in~\cite{hub13}, and restated in~\cite{ier14}, that if a phylogenetic network $\cN$ is recoverable, then $Tn(\cN)$ encodes $\cN$. However, Huber et al.~\cite{hub15} construct a family of counterexamples to this conjecture, where the level of the phylogenetic network is exponential in the size of the leaf set. In particular, for all $n\ge 4$, if the size of the leaf set is $n$, then the level of the phylogenetic network is $(2^{n-2}-1)n$. Thus if $n=4$, then the level of the counterexample is~$12$. This raises the problem of determining the largest value of~$k$ for which all recoverable level-$k$ phylogenetic networks are encoded by their sets of trinets. In the last section of the paper, we show that this value is at most~$3$ by showing that the two non-isomorphic recoverable phylogenetic networks $\cN_1$ and $\cN_2$, each of level-$4$, shown in Fig.~\ref{counterexample} have the property that, up to isomorphism, $Tn(\cN_1)=Tn(\cN_2)$. Note that each of the counterexamples $\cN$ in~\cite{hub15} have the much stronger property that the set of all phylogenetic networks exhibited by $\cN$ on all proper subsets of the leaf set of $\cN$ does not encode $\cN$.

\begin{figure}
		\centering
		\begin{subfigure} {0.45\textwidth}
			\centering
			\begin{tikzpicture}
			\draw (0,0) -- (-2.4,-3);
			\draw (0,0) -- (2.4,-3);
			\draw (1.2,-1.5) -- (-0.8,-3);
			\draw (-1.2,-1.5) -- (-0.06,-2.355);
			\draw (0.06,-2.445) -- (0.8,-3);
			\draw (-2.4,-3) -- (-1.6,-4) -- (-0.8,-3) -- (0,-4) -- (0.8, -3) -- (1.6, -4) -- (2.4,-3);
			\draw (-2.4,-3) -- (-3.2,-4) -- (-3.2,-5);
			\draw (-1.6,-4) -- (-1.6,-5);
			\draw (0,-4) -- (0,-5);
			\draw (1.6,-4) -- (1.6,-5);
			
			\filldraw[white] (-1.805, -3.75) circle (2pt);
			\filldraw[white] (-1.32, -3.67) circle (2pt);
			\filldraw[white] (-0.41, -3.51) circle (2pt);
			\filldraw[white] (0.53, -3.33) circle (2pt);
			\filldraw[white] (0.98, -3.25) circle (2pt);
			\draw (2.4,-3) -- (-3.2,-4);
			
			\filldraw[black] (0,0) circle (2pt);
			\filldraw[black] (-2.4,-3) circle (2pt);
			\filldraw[black] (1.2,-1.5) circle (2pt);
			\filldraw[black] (-0.8,-3) circle (2pt);
			\filldraw[black] (-1.2,-1.5) circle (2pt);
			\filldraw[black] (0.8,-3) circle (2pt);
			\filldraw[black] (-2.4,-3) circle (2pt);
			\filldraw[black] (-1.6,-4) circle (2pt);
			\filldraw[black] (-0.8,-3) circle (2pt);
			\filldraw[black] (0,-4) circle (2pt);
			\filldraw[black] (0.8,-3) circle (2pt);
			\filldraw[black] (1.6,-4) circle (2pt);
			\filldraw[black] (2.4,-3) circle (2pt);
			\filldraw[black] (0,-5) circle (2pt) node [label=below:$x_3$] {};
			\filldraw[black] (-1.6,-5) circle (2pt) node[label=below:$x_2$] {};
			\filldraw[black] (1.6,-5) circle (2pt) node [label=below:$x_4$] {};
			\filldraw[black] (-3.2,-4) circle (2pt);
			\filldraw[black] (-3.2,-5) circle (2pt) node[label=below:$x_1$] {};
			
			\draw (-0.6,-0.75) -- (0, -1.35);
			\filldraw[black] (-0.6,-0.75) circle (2pt);
			\filldraw[black] (0,-1.35) circle (2pt) node [label=below:$x_5$] {};
			\end{tikzpicture}
			\subcaption{$\cN_1$}
		\end{subfigure}
		\begin{subfigure} {0.45\textwidth}
			\centering
			\begin{tikzpicture}
			\draw (0,0) -- (-2.4,-3);
			\draw (0,0) -- (2.4,-3);
			\draw (1.2,-1.5) -- (-0.8,-3);
			\draw (-1.2,-1.5) -- (-0.06,-2.355);
			\draw (0.06,-2.445) -- (0.8,-3);
			\draw (-2.4,-3) -- (-1.6,-4) -- (-0.8,-3) -- (0,-4) -- (0.8, -3) -- (1.6, -4) -- (2.4,-3);
			\draw (-2.4,-3) -- (-3.2,-4) -- (-3.2,-5);
			\draw (-1.6,-4) -- (-1.6,-5);
			\draw (0,-4) -- (0,-5);
			\draw (1.6,-4) -- (1.6,-5);
			
			\filldraw[white] (-1.805, -3.75) circle (2pt);
			\filldraw[white] (-1.32, -3.67) circle (2pt);
			\filldraw[white] (-0.41, -3.51) circle (2pt);
			\filldraw[white] (0.53, -3.33) circle (2pt);
			\filldraw[white] (0.98, -3.25) circle (2pt);
			\draw (2.4,-3) -- (-3.2,-4);
			
			\filldraw[black] (0,0) circle (2pt);
			\filldraw[black] (-2.4,-3) circle (2pt);
			\filldraw[black] (1.2,-1.5) circle (2pt);
			\filldraw[black] (-0.8,-3) circle (2pt);
			\filldraw[black] (-1.2,-1.5) circle (2pt);
			\filldraw[black] (0.8,-3) circle (2pt);
			\filldraw[black] (-2.4,-3) circle (2pt);
			\filldraw[black] (-1.6,-4) circle (2pt);
			\filldraw[black] (-0.8,-3) circle (2pt);
			\filldraw[black] (0,-4) circle (2pt);
			\filldraw[black] (0.8,-3) circle (2pt);
			\filldraw[black] (1.6,-4) circle (2pt);
			\filldraw[black] (2.4,-3) circle (2pt);
			\filldraw[black] (0,-5) circle (2pt) node [label=below:$x_3$] {};
			\filldraw[black] (-1.6,-5) circle (2pt) node[label=below:$x_2$] {};
			\filldraw[black] (1.6,-5) circle (2pt) node [label=below:$x_4$] {};
			\filldraw[black] (-3.2,-4) circle (2pt);
			\filldraw[black] (-3.2,-5) circle (2pt) node[label=below:$x_1$] {};
			
			\draw (0.6,-0.75) -- (0, -1.35);
			\filldraw[black] (0.6,-0.75) circle (2pt);
			\filldraw[black] (0,-1.35) circle (2pt) node [label=below:$x_5$] {};
			\end{tikzpicture}
			\subcaption{$\cN_2$}
		\end{subfigure}
		\caption{Two non-isomorphic \blue{level-$4$} phylogenetic networks $\cN_1$ and $\cN_2$. Both $\cN_1$ and $\cN_2$ are recoverable and, up to isomorphism, $Tn(\cN_1)=Tn(\cN_2)$.}
		\label{counterexample}
\end{figure}
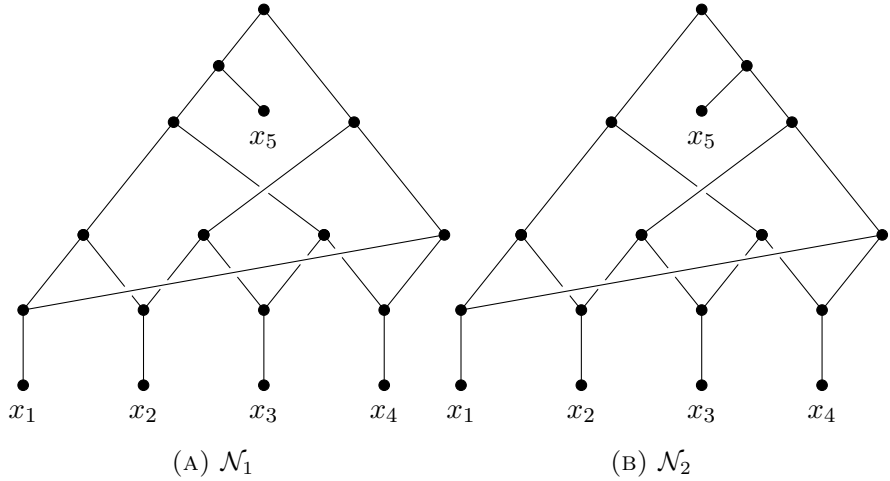

The paper is organised as follows. The next section consists of some preliminary lemmas which are used in the proof of Theorem~\ref{main}. The proof of Theorem~\ref{main} is by induction on the sum of the number of leaves and the number of reticulations \blue{of an orchard network}. The approach taken is to initially pick either a cherry, thereby reducing the number of leaves, or a reticulated cherry, thereby reducing the number of reticulations. In Section~\ref{exhibit2}, we establish various lemmas concerning the notion of exhibit and that of cherries and reticulated cherries. The proof of Theorem~\ref{main} is given in Section~\ref{proof}. The last section, Section~\ref{example}, verifies the above-mentioned level-$4$ example.

\blue{We end the introduction with two remarks. First, a concept in mathematical phylogenetics that is similar to exhibit is that of ``display''. In particular, let $\cN$ be a phylogenetic network on $X$ and let $\cT$ be a rooted phylogenetic $X'$-tree, where $X'\subseteq X$. We say $\cN$ {\em displays} $\cT$ if $\cT$ can be obtained from $\cN$ by deleting arcs and vertices, and suppressing any resulting vertices of in-degree one and out-degree one. If $\cN$ is a rooted phylogenetic tree, then the concepts of exhibit and display are equivalent. For clarification, in the initial part of the introduction, whenever we said, for example, a phylogenetic network ``infers'' a rooted phylogenetic tree, we really meant a phylogenetic network displays a rooted phylogenetic tree.}

Second, Theorem~\ref{main} and other analogous theorems are a step towards developing supertree-type methods for building phylogenetic networks. In practice, it is unlikely that the input to such a method is the entire set $Tn(\cN)$ of trinets \blue{exhibited by} a phylogenetic network $\cN$. A more realistic task is when the input is an arbitrary subset of trinets and the goal is to decide whether or not there is a phylogenetic network that exhibits each of the trinets in this set. This has been considered previously for when the input is a set of rooted triples and we are asked to find a level-$1$ network that \blue{displays} each of the rooted triples in the set~\cite{ier11, jan06a, jan06b} and, more recently, when the input is a set of trinets and we are asked to find a level-$1$ network that exhibits each of the trinets in the set~\cite{hub17}. As an intermediate step towards developing a supertree-type method for building orchard networks, we leave it as an open problem to develop an algorithm that takes an arbitrary collection of trinets on overlapping leaf sets and decides whether or not there is an orchard network that exhibits each trinet in the collection.

\section{Exhibiting Lemmas}
\label{exhibit1}

In this section, we establish some general lemmas in relation to the notion of exhibiting that will used in the proof of Theorem~\ref{main}. The first two lemmas are used in several places.

\begin{lemma}
Let $\cN$ be a phylogenetic network on $X$, and let $A\subseteq X$. Let $G_A$ be the path graph of $\cN$ on $A$, and let
$$G_A=G_0, G_1, G_2, \ldots, G_k=\cN_A$$
be a sequence of directed graphs such that, for all $i\in \{1, 2, \ldots, k\}$, the directed graph $G_i$ is obtained from $G_{i-1}$ by ether suppressing a vertex of in-degree one and out-degree one, or deleting an arc in parallel. \blue{Let $u$ and $v$ be vertices of $G_i$ for some $i$. Then}
\begin{enumerate}[{\rm (i)}]
\item \blue{If there is a path from $u$ to $v$ in $G_i$, then there is a path from $u$ to $v$ in $G_A$.}

\item If $u$ and $v$ are vertices of $G_i$ for some $i$, then every path from $u$ to a (fixed) leaf $\ell$ traverses $v$ in $G_A$ if and only if every path from $u$ to $\ell$ traverses $v$ in $G_i$.
\end{enumerate}
\label{traverse1}
\end{lemma}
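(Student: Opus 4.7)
The plan is to prove both parts simultaneously by induction on $i$, with the base case $i=0$ being trivial since $G_0=G_A$. For the inductive step, I analyse the single operation transforming $G_{i-1}$ into $G_i$: either the suppression of a vertex $w$ of in-degree one and out-degree one (with incoming arc $(a,w)$ and outgoing arc $(w,b)$ replaced by $(a,b)$) or the deletion of one arc in a parallel pair. The key observation I will rely on throughout is that $v\in V(G_i)\subseteq V(G_{i-1})\subseteq\cdots\subseteq V(G_A)$, so $v$ is never one of the vertices that gets suppressed at any stage.

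For part (i), given a directed path $P$ from $u$ to $v$ in $G_i$, I lift it to a walk $P'$ in $G_{i-1}$ by reinserting $w$ between $a$ and $b$ whenever $P$ uses the new arc $(a,b)$ created by the suppression; in the parallel-arc case every arc of $G_i$ is already an arc of $G_{i-1}$, so $P$ itself lifts. Since $w\notin V(G_i)$, the inserted $w$ is distinct from every vertex of $P$, so $P'$ is a simple path. Applying the inductive hypothesis to $P'$ gives a path from $u$ to $v$ in $G_A$.

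For part (ii), I argue both directions. For the forward direction, suppose every $u\to\ell$ path in $G_A$ traverses $v$, and take an arbitrary $u\to\ell$ path $Q$ in $G_i$; lifting $Q$ to $G_A$ (via the construction in (i), iterated down the sequence) only inserts vertices that were suppressed somewhere along $G_0,\ldots,G_i$, none of which equals $v$. Thus the lifted path passes through $v$ iff $Q$ does, and by hypothesis it does, so $Q$ traverses $v$. For the converse, take any $u\to\ell$ path $P$ in $G_A$ and project it forward to $G_i$ by deleting each suppressed vertex from the vertex sequence (the two consecutive arcs through such a vertex collapse into the single new arc in the next graph) and keeping any arc that is retained after a parallel deletion. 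Since $P$ has no repeated vertex and every collapsed pair of consecutive arcs is replaced by a genuine arc of the subsequent graph, the projection is a simple directed path in $G_i$ from $u$ to $\ell$; by hypothesis it traverses $v$, and because $v$ is never deleted in the projection, $P$ also traverses $v$.

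The main obstacle is essentially bookkeeping: verifying that the lift of a path is again a simple path (which follows from $w$ having in- and out-degree one at the moment of suppression) and that the projection yields a legitimate directed path rather than merely a walk (which follows from the suppressed arcs being precisely the ones that collapse to the new arc in $G_i$). Once these two constructions are in place, the induction goes through cleanly because the vertex $v$ in the statement is, by assumption, stable under every operation in the sequence.
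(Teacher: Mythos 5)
Your proof is correct and takes essentially the same approach as the paper: both arguments transfer the statement one simplification step at a time by lifting a path across a suppression (reinserting the suppressed vertex) and projecting it forward (collapsing its two incident arcs onto the new arc), using the key fact that $u$ and $v$ survive to $G_i$ and so are never suppressed. The paper merely packages this as a chain of equivalences over $j$ rather than an explicit induction, and omits the (simpler) proof of (i) that you write out.
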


\begin{proof}
\blue{We omit the proof of (i) as it takes the same approach as the proof of (ii) but is simpler. For the proof of (ii),} it suffices to show that if $j\in \{0, 1, \ldots, i-1\}$, then every path from $u$ to $\ell$ traverses $v$ in $G_j$ if and only if every path from $u$ to $\ell$ traverses $v$ in $G_{j+1}$. Clearly, this sufficiency holds if $G_{j+1}$ is obtained from $G_j$ by deleting an arc in parallel. Therefore assume that $G_{j+1}$ is obtained from $G_j$ by suppressing a vertex, say $w$, of in-degree one and out-degree one. Let $e$ denote the new arc in $G_{j+1}$ resulting from this suppression. Now, if $P$ is a path from $u$ to $\ell$ that traverses $v$ and $w$ in $G_j$, then the path obtained from $P$ by replacing $w$ and its incident arcs with $e$ is a path from $u$ to $\ell$ that traverses $v$ and $e$ in $G_{j+1}$. Since the analogous converse of this also holds, it follows that every path from $u$ to $\ell$ traverses $v$ in $G_j$ if and only if every path from $u$ to $\ell$ traverses $v$ in $G_{j+1}$. This completes the proof of the lemma.
\end{proof}

\begin{lemma}
Let $\cN$ be a phylogenetic network on $X$, and let $A\subseteq X$. Let $u$ and $v$ be vertices of the path graph $G_A$ of $\cN$ on $A$, and let $\ell\in A$. \blue{Then}
\begin{enumerate}[{\rm (i)}]
\item If every path from $u$ to $\ell$ traverses $v$ in $G_A$, then every path from $u$ to $\ell$ traverses $v$ in $\cN$.

\item If $v$ is a stable ancestor of $\ell$ in $\cN_A$, then $v$ is a stable ancestor of $\ell$ in $\cN$.
\end{enumerate}
\label{traverse2}
\end{lemma}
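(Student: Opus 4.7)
The plan is to exploit the fact that $G_A$ is literally a subgraph of $\cN$ (it is obtained from $\cN$ only by deleting vertices and arcs, with no suppressions or parallel-arc removals), so that every path in $G_A$ is automatically a path in $\cN$. The crux for (i) is the non-trivial converse: any path in $\cN$ from a vertex of $G_A$ to a leaf of $A$ is forced to already lie inside $G_A$. Part (ii) should then fall out by chaining (i) with Lemma~\ref{traverse1}(ii) together with the fact that $\lsa(A)$ is itself a stable ancestor of $\ell$ in $\cN$.

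For (i), I would first note that because $u$ is a vertex of $G_A$, by the definition of the path graph $u$ lies on some path in $\cN$ from $\lsa(A)$ to a leaf in $A$; in particular, there is a path $Q$ in $G_A$ from $\lsa(A)$ to $u$. Given an arbitrary path $P$ from $u$ to $\ell$ in $\cN$, the concatenation of $Q$ and $P$ is a path in $\cN$ from $\lsa(A)$ to the leaf $\ell\in A$, and so every vertex and arc traversed by $P$ sits on such a path. By the definition of $G_A$ this forces $P$ to be a path in $G_A$, so the hypothesis of (i) applies and $P$ must traverse $v$.

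For (ii), I would first argue that $\lsa(A)$ is still the root of $\cN_A$: $\lsa(A)$ has in-degree zero in $G_A$, and suppression in the full simplification only removes vertices of in-degree one, so $\lsa(A)$ is never suppressed. Assuming $v$ is a stable ancestor of $\ell$ in $\cN_A$, every path from $\lsa(A)$ to $\ell$ in $\cN_A$ traverses $v$. Lemma~\ref{traverse1}(ii), applied with $u$ replaced by $\lsa(A)$, then transfers this statement back to $G_A$. Part (i), applied with $u=\lsa(A)$, upgrades it further to $\cN$: every path from $\lsa(A)$ to $\ell$ in $\cN$ traverses $v$. Finally, because $\lsa(A)$ is a stable ancestor of $\ell$ in $\cN$, every path in $\cN$ from the root to $\ell$ factors through $\lsa(A)$ and then continues to $\ell$, and so by the previous sentence it must traverse $v$; thus $v$ is a stable ancestor of $\ell$ in $\cN$.

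The main obstacle is not combinatorial but rather definitional bookkeeping: one must keep straight the three graphs $\cN$, $G_A$, and $\cN_A$ together with their respective roots, and verify that the quantifier ``every path traverses $v$'' behaves consistently as one moves between them. The only subtlety is confirming that $\lsa(A)$ persists as the root of $\cN_A$ (so that one can translate stability in $\cN_A$ into a statement starting at $\lsa(A)$); once this is noted, (ii) is essentially a two-line consequence of (i) and Lemma~\ref{traverse1}(ii).
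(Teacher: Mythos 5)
Your proof is correct and follows essentially the same route as the paper: part (i) rests on the observation that any path in $\cN$ from a vertex of $G_A$ to a leaf of $A$ already lies in $G_A$ (which the paper dismisses as an immediate consequence of the construction, while you spell out the concatenation argument), and part (ii) chains Lemma~\ref{traverse1} with part (i) via $\lsa(A)$ exactly as the paper does.
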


\begin{proof}
Since $u$ and $v$ are vertices of $G_A$, the proof of (i) is an immediate consequence of the construction of $G_A$ from $\cN$. To prove (ii), suppose that $v$ is a stable ancestor of $\ell$ in $\cN_A$. Then, as the root of $\cN_A$ is $\lsa(A)$, it follows by Lemma~\ref{traverse1}, that every path from $\lsa(A)$ to $\ell$ in $G_A$ traverses $v$. As every path from the root of $\cN$ to $\ell$ traverses $\lsa(A)$, it follows by (i) that $v$ is a stable ancestor of $\ell$ in $\cN$.
\end{proof}

The next three lemmas provide sufficient conditions for a vertex of a phylogenetic network $\cN$ to be a vertex of the phylogenetic network exhibited by $\cN$ on a given subset of leaves.

\begin{lemma}
\label{split_desc_lemma}
Let $\cN$ be a phylogenetic network on $X$, and let $A\subseteq X$. Let $v$ be a tree vertex of $\cN$ with children $c_1$ and $c_2$, and suppose there exists $\ell_1, \ell_2 \in A$ such that
\begin{enumerate}[{\rm (i)}]
\item $\ell_1$ is a descendant of $c_1$,
\item $\ell_2$ is a descendant of $c_2$, and
\item $\ell_2$ is not a descendant of $c_1$.
\end{enumerate}
Then $v$ is a vertex of $\cN_A$.
\end{lemma}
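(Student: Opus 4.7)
The plan is to show that $v$ is first a vertex of the path graph $G_A$ of $\cN$ on $A$, and then to follow $v$ through the full-simplification sequence $G_A = G_0, G_1, \ldots, G_k = \cN_A$, verifying that $v$ is never suppressed. To place $v$ in $G_A$, first I would show that $\lsa(A)$ is an ancestor of $v$: take any directed path $P$ in $\cN$ from the root through $v$ and $c_1$ to $\ell_1$ (available by (i)). Because $\lsa(A)$ is a stable ancestor of $\ell_1$, the path $P$ traverses $\lsa(A)$; and if $\lsa(A)$ lay strictly on the portion of $P$ below $v$, then $\lsa(A)$ would be a descendant of $c_1$, forcing the descendant $\ell_2$ of $\lsa(A)$ to also be a descendant of $c_1$, contradicting (iii). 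Extending paths from $\lsa(A)$ through $v$ via $c_1$ to $\ell_1$ and via $c_2$ to $\ell_2$ places $v$ and both arcs $(v,c_1),(v,c_2)$ in $G_A$, so $v$ begins with out-degree two in $G_A$.

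Next I would prove, by induction on $i$, the invariant that $v$ is a vertex of $G_i$ with two out-arcs to distinct vertices $h_1^i$ and $h_2^i$, where $h_1^i$ is a descendant in $\cN$ of $c_1$ and $h_2^i$ is a descendant in $\cN$ of $c_2$. The base case $i=0$ is covered above with $h_1^0 = c_1$ and $h_2^0 = c_2$. In the inductive step, $v$'s out-arcs are unaffected unless the operation suppresses $h_1^i$ or $h_2^i$ (in which case that out-neighbour is replaced by its unique child in $G_i$, which is still a descendant of the relevant $c_j$ in $\cN$); deleting a parallel out-arc at $v$ would require $h_1^i = h_2^i$ and is excluded by the inductive hypothesis.

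The main obstacle is to rule out a suppression step that creates the coincidence $h_1^{i+1} = h_2^{i+1} = z$ for some vertex $z$. Suppose it occurred. Then $z$ is a descendant in $\cN$ of both $c_1$ and $c_2$. Because the operations of full simplification preserve reachability between any pair of vertices that remain in $G_{i+1}$ (this is the converse direction of Lemma~\ref{traverse1}(i), immediate from the nature of both operations), a path from $v$ to $\ell_2$ exists in $G_{i+1}$. This path must leave $v$ through one of the now-parallel arcs to $z$, making $z$ an ancestor of $\ell_2$ in $G_{i+1}$, and hence in $\cN$ by Lemma~\ref{traverse1}(i). Concatenating a path from $c_1$ to $z$ with one from $z$ to $\ell_2$ in $\cN$ shows $\ell_2$ is a descendant of $c_1$, contradicting (iii). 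Hence the invariant persists to $i = k$, and $v$ is a vertex of $\cN_A$.
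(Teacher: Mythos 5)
Your proof is correct and follows the same two-step strategy as the paper's: first place $v$ in the path graph $G_A$ by ruling out that $\lsa(A)$ is a proper descendant of $v$ (which would force a path from $c_1$ to $\ell_2$), and then show that $v$ survives the full simplification because its two out-neighbours can never merge without again producing a path from $c_1$ to $\ell_2$, contradicting (iii). The only difference is one of execution rather than of ideas: the paper handles the second step by invoking Lemma~\ref{traverse1}(ii) and Lemma~\ref{traverse2}(i) (if $v$ were suppressed, every path from $v$ to a leaf of $A$ in $\cN$ would traverse its single remaining out-neighbour $w$, yielding a $c_1$-to-$\ell_2$ path through $w$), whereas you track the two out-neighbours explicitly through the simplification sequence and reach the identical contradiction.
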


\begin{proof}
We first show that $v$ is a vertex of the path graph $G_A$ of $\cN$ on $A$. Since there is a path \blue{in $\cN$} from $v$ to a leaf in $A$, either $v$ is a descendant of \blue{$\lsa(A)$} or \blue{$\lsa(A)$} is a descendant of $v$. If the latter holds, then there are paths from $c_1$ to $\ell_1$ and from $c_2$ to $\ell_2$, each of which traverses \blue{$\lsa(A)$}. This implies that there is a path from $c_1$ to $\ell_2$ via \blue{$\lsa(A)$}, contradicting (iii). Hence $v$ is a descendant of \blue{$\lsa(A)$}, and so $v$ is a vertex of $G_A$.

We complete the proof of the lemma by showing that $v$ is not suppressed in the process of obtaining $\cN_A$ from $G_A$. If $v$ is suppressed, then at some stage in the process, $v$ has one incoming arc and one outgoing arc, $(v, w)$ say. By Lemma~\ref{traverse1}, every path in $G_A$ from $v$ to a leaf in $A$ traverses $w$ which, in turn, implies by Lemma~\ref{traverse2} that every path in $\cN$ from $v$ to a leaf in $A$ traverses $w$. In particular, every path in $\cN$ from $c_1$ to $\ell_1$ and from $c_2$ to $\ell_2$ traverses $w$, in which case, there is a path in $\cN$ from $c_1$ to $\ell_2$ via $w$, contradicting (iii). It follows that $v$ is a vertex of $\cN_A$.
\end{proof}

\begin{lemma}
\label{retic_parent_lemma}
Let $\cN$ be a phylogenetic network on $X$, and let $A\subseteq X$. Let $v$ be a reticulation of $\cN$. If a parent of $v$ is a vertex of $\cN_A$, then $v$ is a vertex of $\cN_A$.
\end{lemma}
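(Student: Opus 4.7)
My plan is to prove the lemma by contradiction. Suppose $v \notin \cN_A$; I will show that this forces $p \notin \cN_A$. The argument proceeds in two main steps: first, establishing that $v \in G_A$ and that $v$ has in-degree $2$ in $G_A$; and second, analyzing the full simplification to derive the contradiction. Throughout, let $u = \lsa(A)$.

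Since $p \in \cN_A$, $p$ must survive the full simplification, so its degree in $G_A$ cannot be too small. If $p$ is a tree vertex of $\cN$ with children $v$ and $w$, then both outgoing arcs $(p, v)$ and $(p, w)$ must lie in $G_A$ (otherwise $p$ has in-degree and out-degree each at most $1$ in $G_A$, and is therefore suppressed). If $p$ is a reticulation of $\cN$, then $v$ is $p$'s only child and, since $p \in G_A$, the arc $(p, v)$ must lie in $G_A$. Either way, $v \in G_A$. To show $v$ has in-degree $2$ in $G_A$, let $p'$ denote the other parent of $v$ in $\cN$ and choose a leaf $\ell \in A$ that is a descendant of $v$. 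There is a path $P$ in $\cN$ from the root to $\ell$ of the form $\cdots \to p' \to v \to \cdots \to \ell$, and since $u$ is a stable ancestor of $\ell$, $P$ must traverse $u$. The alternatives that $u = v$ or that $u$ is a strict descendant of $v$ both lead to contradictions---the first since $p$ would then lie above $u$ and hence outside $G_A$, the second since $v \in G_A$ is a descendant of $u$, contradicting the acyclicity of $\cN$. Hence $u$ precedes $p'$ on $P$, making $p'$ a descendant of $u$ and therefore $(p', v) \in G_A$.

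The main obstacle, and final step, is the suppression analysis. Since $v$ has in-degree $2$ and out-degree $1$ in $G_A$ but $v \notin \cN_A$, its in-degree must drop to $1$ at some step of the simplification via a parallel-arc deletion. As $p$ is never suppressed, the source of the arc $(p, v)$ remains $p$ throughout, so the source of the other in-arc $(p', v)$ must eventually coincide with $p$ as well. Unwinding the suppressions exhibits a directed path $p \to p_{m-1} \to \cdots \to p_0 = p' \to v$ in $G_A$ (and hence in $\cN$) whose interior vertices $p_{m-1}, \ldots, p_0$ are each suppressed. This path cannot begin with the arc $(p, v)$ (which would create the cycle $v \to \cdots \to p' \to v$), so $p$ must be a tree vertex of $\cN$ with $p_{m-1}$ equal to its other child $w$. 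I then track the cascade: as the chain $w, p_{m-2}, \ldots, p_0$ is successively suppressed, the arc $(p, w)$ is progressively transformed and, once $p_0 = p'$ is suppressed, becomes an arc $(p, v)$ parallel to the original. After parallel-arc deletion, $p$'s out-degree drops to $1$; combined with its in-degree of at most $1$ ($p$ being a tree vertex of $\cN$), $p$ is itself suppressed, yielding the desired contradiction.
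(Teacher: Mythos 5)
Your proof is correct and follows essentially the same route as the paper's: first show that $v$ and both of its parents survive into the path graph $G_A$, and then argue that the only way $v$ could be suppressed is for its two in-arcs to become parallel with common source $p$, which forces the tree vertex $p$ to drop to out-degree one and be suppressed itself, contradicting $p$ being a vertex of $\cN_A$. Your version of the second step is more detailed (the explicit unwinding of the chain of suppressed vertices into a path of $G_A$), but the underlying argument is the same.
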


\begin{proof}
\blue{Let $p$ and $q$ denote the parents of $v$ in $\cN$, and suppose that $p$ is a vertex of $\cN_A$.} We begin by showing that $v$, \blue{as well as $p$ and $q$}, is a vertex of the path graph $G_A$ of $\cN$ on $A$. \blue{Now} $p$ lies on a path of $\cN$ from $\lsa(A)$ to a leaf in $A$. If $p$ is a reticulation of $\cN$, then $v$ also lies on this path. Furthermore, if $p$ is a tree vertex of $\cN$, then, as $p$ is a vertex of $\cN_A$, both children of $p$ must also lie on such a path; otherwise, $p$ has in-degree one and out-degree one in $G_A$. Thus $v$ is a vertex of $G_A$, and so both parents of $v$ are \blue{also} vertices of $G_A$.

It remains to show that $v$ is not suppressed in the process of obtaining $\cN_A$ from $G_A$. If $v$ is suppressed in this process, then, as $p$ is a vertex of $\cN_A$, at subsequent stages in the process of obtaining $\cN_A$ from $G_A$, the \blue{vertex $q$} is suppressed, and $v$ has two distinct incoming arcs in parallel, one of which is $(p, v)$. Since $p$ is a vertex of $\cN_A$, this in turn implies that the other arc in parallel also connects $p$ and $v$. But then $p$ is a tree vertex of $\cN$ and so, once one of these parallel arcs is deleted, $p$ has in-degree one and out-degree one, a contradiction as $p$ is a vertex of $\cN_A$. Hence $v$ is not suppressed in obtaining $\cN_A$ from $G_A$, and so $v$ is a vertex of $\cN_A$.
\end{proof}

\begin{lemma}
\label{all_desc_lemma}
Let $\cN$ be a phylogenetic network on $X$, and let $A\subseteq X$. Let $v$ be a tree vertex of $\cN$ that is a descendant of $\lsa(A)$. If $A$ contains every leaf of $\cN$ that is a descendant of $v$, then every descendant vertex of $v$ \blue{in $\cN$} is a vertex of $\cN_A$.
\end{lemma}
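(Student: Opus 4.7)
The plan is to show that every vertex $w$ in the set $D_v$ of descendants of $v$ in $\cN$ is (a) a vertex of the path graph $G_A$ of $\cN$ on $A$, and (b) not suppressed during the simplification from $G_A$ to $\cN_A$. Part (a) is immediate: any descendant $w$ of $v$ either is itself a leaf (in which case $w\in A$ by hypothesis) or has some descendant leaf $\ell$, which is a descendant of $v$ and therefore in $A$. Concatenating a path from $\lsa(A)$ to $v$, a path $v\to w$, and a path $w\to\ell$ places $w$ on a path from $\lsa(A)$ to a leaf of $A$, so $w$ is a vertex of $G_A$; the corresponding arcs among vertices of $D_v$ are likewise in $G_A$.

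Before attacking (b), I would record the following subclaim: for every reticulation $w\in D_v$, both parents of $w$ are vertices of $G_A$. One parent lies on an $\cN$-path from $v$ to $w$ and hence is in $D_v$. For the other parent $p$, consider any $\cN$-path from the root through $p$ and $w$ down to some leaf $\ell\in A$ that is a descendant of $w$; this path traverses $\lsa(A)$. Since $\lsa(A)$ is an ancestor of $v$ and hence of $w$, any occurrence of $\lsa(A)$ at or strictly below $w$ on this path forces $\lsa(A)=w$ and a directed cycle in $\cN$. Hence $\lsa(A)$ appears at or above $p$, making $p$ a descendant of $\lsa(A)$ and an ancestor of $\ell$, so $p$ is a vertex of $G_A$.

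For (b), I argue by contradiction: suppose some vertex of $D_v$ is suppressed in the simplification sequence $G_A=G_0,G_1,\ldots,G_k=\cN_A$, and let $w$ be the first such, suppressed at the step $G_i\to G_{i+1}$. Leaves are never suppressed, so $w$ is a tree vertex or a reticulation of $\cN$. If $w$ is a tree vertex, its two children $c_1,c_2$ lie in $D_v$ and are unsuppressed in $G_i$ by minimality of $w$, so the arcs $(w,c_1)$ and $(w,c_2)$ of $G_A$ have not been re-routed; since $c_1\neq c_2$ they were never deletable as parallel arcs either, so $w$ has out-degree $2$ in $G_i$, contradicting suppressibility. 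If $w$ is a reticulation with parents $p_1,p_2$, the subclaim gives $w$ in-degree $2$ in $G_A$, so its in-arcs must have merged into a pair of parallel arcs through prior suppressions. Tracing these suppression chains and applying Lemma~\ref{traverse1}(i), a common source in $G_i$ would produce a directed $\cN$-path from a parent of $w$ lying in $D_v$ to a parent of $w$ lying outside $D_v$; but children of $D_v$-vertices are themselves in $D_v$, which forces the outside parent into $D_v$, again contradicting minimality.

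The main obstacle is the reticulation subcase of (b): showing that the two in-arcs of $w$ cannot merge during the simplification requires both the subclaim that both parents of $w$ lie in $G_A$ and the Lemma~\ref{traverse1}(i) argument pulling intermediate-graph paths back to $\cN$. The tree-vertex subcase, by contrast, reduces to the observation that $D_v$ is closed under taking children, so nothing downstream of $w$ can be suppressed before $w$. Once both suppressibility cases are dispatched, the conclusion that every descendant of $v$ in $\cN$ is a vertex of $\cN_A$ is immediate.
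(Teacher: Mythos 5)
Your proof is correct and follows essentially the same route as the paper's: first show every descendant of $v$ lies in the path graph $G_A$, then run a first-suppressed-descendant (minimal counterexample) argument over the simplification sequence, splitting into the tree-vertex and reticulation cases and using Lemma~\ref{traverse1}(i) to pull the offending suppressed parent back into the descendant set of $v$. The only difference is that you state and prove explicitly the subclaim that both parents of a reticulation descendant of $v$ are vertices of $G_A$, a point the paper covers only implicitly with its assertion that no descendant of $v$ has in-degree one and out-degree one in $G_A$.
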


\begin{proof}
First observe that every vertex that is a descendant of $v$ is a vertex of the path graph $G_A$ of $\cN$ on $A$. Furthermore, as $A$ contains every leaf that is a descendant of $v$, it follows that no vertex that is a descendant of $v$ has in-degree one and out-degree one in $G_A$. Suppose that at some stage of the process of obtaining $\cN_A$ from $G_A$ a descendant of $v$, say $w$, has in-degree one and out-degree one. Without loss of generality, choose $w$ to be the first such descendant of $v$ with this property. If $w$ is a tree vertex of $\cN$, then $w$ has two distinct children and so, for $w$ to have in-degree one and out-degree one, one if its children needs to have in-degree one and out-degree one prior to this happening. \blue{As} both children of $w$ are descendants of $v$, \blue{this contradicts} the choice of $w$. Thus we may assume that $w$ is a reticulation of $\cN$.

At least one parent, $p$ say, of $w$ is a descendant of $v$ \blue{in $\cN$}. Since $w$ is suppressed in the process of obtaining $\cN_A$ from $G_A$, it follows by the choice of $w$ that, in this process prior to $w$ having in-degree one and out-degree one, the parent of $w$ that is not $p$, say $q$, is suppressed and $w$ has two distinct incoming arcs in parallel, one of which is $(p, w)$. \blue{By Lemma~\ref{traverse1}(i), this implies that $q$ is a descendant of $v$ in $G_A$, and so} $q$ is a descendant of $v$ \blue{in $\cN$}, contradicting the choice of $w$. This completes the proof of the lemma.
\end{proof}

\begin{lemma}
\label{subset_phynet_lemma}
Let $\mathcal N$ be a phylogenetic network on $X$, and let $A\subseteq B\subseteq X$. Then $\cN_A$ is the phylogenetic network exhibited by $\cN_B$ on $A$.
\end{lemma}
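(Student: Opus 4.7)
The plan is to show that $\cN_A$ and $(\cN_B)_A$, the phylogenetic network exhibited by $\cN_B$ on $A$, are both full simplifications of the same directed graph, namely the path graph $G_A$ of $\cN$ on $A$. This uses the confluence of full simplification: any two maximal sequences of in-/out-degree-one suppressions and parallel-arc deletions starting from the same directed graph terminate at the same directed graph. The case $|A|\le 1$ is immediate, so assume $|A|\ge 2$. Write $G_B$ for the path graph of $\cN$ on $B$ and $G'_A$ for the path graph of $\cN_B$ on $A$, and write $\mathrm{simp}$ for full simplification.

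The first step is to verify $\lsa_{\cN_B}(A)=\lsa_\cN(A)$. Because $A\subseteq B$, the vertex $\lsa_\cN(B)$ is a stable ancestor of $A$ in $\cN$, so $\lsa_\cN(A)$ is a descendant of $\lsa_\cN(B)$ and hence a vertex of $G_B$. Since $|A|\ge 2$, lowest-ness forces $\lsa_\cN(A)$ to be a tree vertex both of whose children lie in $G_A$, with descendants in $A$ that witness the hypotheses of Lemma~\ref{split_desc_lemma}; so $\lsa_\cN(A)$ is a vertex of $\cN_B$. Lemma~\ref{traverse2}(ii) excludes any proper descendant of $\lsa_\cN(A)$ in $\cN_B$ from being a stable ancestor of $A$ there, and Lemma~\ref{traverse1}(i) shows $\lsa_\cN(A)$ itself remains a stable ancestor of $A$ in $\cN_B$: a hypothetical root-to-$a$ path in $\cN_B$ avoiding it would lift to a path in $\cN$ avoiding $\lsa_\cN(A)$, contradicting its defining property in $\cN$.

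The second step is to realise $G'_A$ as the terminal graph of a partial simplification applied to $G_A$. Let $G_B=H_0,H_1,\ldots,H_m=\cN_B$ be a simplification sequence taking $G_B$ to $\cN_B$, and for each $i$ let $H_i|_A$ be the sub-directed-graph of $H_i$ on vertices and arcs lying on a path from $\lsa_\cN(A)$ to a leaf in $A$ within $H_i$. Then $H_0|_A=G_A$ and $H_m|_A=G'_A$. Each transition $H_{i-1}\to H_i$ restricts to either a no-op or a legal simplification operation on $H_{i-1}|_A$: a suppressed vertex $w\in V(H_{i-1}|_A)$ has its unique $H_{i-1}$-in-arc and unique $H_{i-1}$-out-arc both forced into $H_{i-1}|_A$ (any $\lsa_\cN(A)$-to-$A$ path witnessing $w\in V(H_{i-1}|_A)$ must use them), so the suppression restricts; parallel-arc deletions restrict analogously. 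Hence $G'_A$ is obtained from $G_A$ by a sequence of legal simplification operations, and confluence yields $\cN_A=\mathrm{simp}(G_A)=\mathrm{simp}(G'_A)=(\cN_B)_A$.

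The main obstacle lies in the bookkeeping of the second step, specifically in verifying that the restricted sequence terminates exactly at $G'_A$ and that each restricted operation is legal. The key equality $V(G'_A)=V(G_A)\cap V(\cN_B)$ needs Lemma~\ref{traverse1}(i) for one direction (lifting $\cN_B$-paths to $\cN$-paths to place any $G'_A$-vertex inside $G_A$) and, for the converse, the observation that an original $\cN$-path from $\lsa_\cN(A)$ to a leaf in $A$ projects through the simplification sequence to a $\cN_B$-path visiting all of its non-suppressed vertices. Together these close the argument.
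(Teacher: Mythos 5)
Your overall strategy is the paper's own: show that the path graph $G'_A$ of $\cN_B$ on $A$ can be reached from the path graph $G_A$ of $\cN$ on $A$ by a (partial) sequence of suppressions and parallel-arc deletions, and then invoke the confluence of full simplification to conclude $\cN_A=\mathrm{simp}(G_A)=\mathrm{simp}(G'_A)=(\cN_B)_A$. Your restriction argument via the graphs $H_i|_A$ is a more explicit justification of the sentence the paper simply asserts, and your preliminary verification that $\lsa_{\cN_B}(A)=\lsa_{\cN}(A)$ addresses a point the paper glosses over. So the route is essentially the same, just with more bookkeeping.

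There is, however, one step that fails as written: the claim that ``lowest-ness forces'' the children of $v=\lsa_{\cN}(A)$ to have descendants in $A$ witnessing the hypotheses of Lemma~\ref{split_desc_lemma}. That lemma requires a leaf $\ell_2$ that is a descendant of $c_2$ but \emph{not} of $c_1$, and being the lowest stable ancestor does not guarantee such a witness. For a concrete obstruction, let $c_1$ and $c_2$ each be a parent of two reticulations $r_1$ and $r_2$, and let every leaf of $A$ lie below $r_1$ or $r_2$; then $v$ is still the lowest stable ancestor of $A$ (neither $c_1$ nor $c_2$, nor $r_1$ nor $r_2$ alone, is stable for all of $A$), yet every leaf of $A$ is a descendant of both $c_1$ and $c_2$, so Lemma~\ref{split_desc_lemma} is inapplicable. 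The intermediate claim itself --- that $\lsa_{\cN}(A)$ is a vertex of $\cN_B$ --- is true, but it should be proved differently: if $\lsa_{\cN}(A)$ were suppressed at some stage $G_i$ of the simplification of $G_B$, it would have a unique out-neighbour $w$ there, so by Lemma~\ref{traverse1}(ii) every path in $G_B$ from $\lsa_{\cN}(A)$ to each $\ell\in A$ traverses $w$, whence by Lemma~\ref{traverse2}(i) the same holds in $\cN$, making $w$ a stable ancestor of $A$ in $\cN$ strictly below $\lsa_{\cN}(A)$, a contradiction. (A similar remark applies to your citation of Lemma~\ref{traverse1}(i) for the ``remains a stable ancestor'' claim, where parts (ii) of Lemmas~\ref{traverse1} and~\ref{traverse2} are what is actually needed; but that is only a miscitation, not a gap in substance.) With this repair the argument goes through and matches the paper's proof of Lemma~\ref{subset_phynet_lemma}.
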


\begin{proof}
Let $G_A$ and $G_B$ be the path graphs of $\mathcal N$ on $A$ and $B$, respectively. Since $A\subseteq B$, the vertex \blue{$\lsa(A)$} is a descendant of \blue{$\lsa(B)$}, and so \blue{$\lsa(A)$} is a vertex of $G_B$. In turn, this implies that $G_A$ is a subgraph of $G_B$. If $v$ is a vertex of $G_A$, then the in-degree of $v$ in $G_A$ is at most the in-degree of $v$ in $G_B$, and the out-degree of $v$ in $G_A$ is at most the out-degree of $v$ in $G_B$. Therefore, every vertex of $G_A$ that is not a vertex of $\cN_B$ is also not a vertex of $\cN_A$. Thus the directed graph $G'_A$, the path graph of $\cN_B$ on $A$, can be obtained from $G_A$ by repeated applications of suppressing vertices of in-degree one and out-degree one, and deleting exactly one arc of any pair of arcs in parallel. Note that we need not take the full simplification of $G_A$ to get $G'_A$. \blue{Since} $\cN_A$ is the full simplification of $G'_A$, \blue{it follows that $\cN_A$ is the phylogenetic network exhibited by $\cN_B$ on $A$}.\end{proof}

The last lemma of this section uses each of Lemmas~\ref{split_desc_lemma}--\ref{subset_phynet_lemma} in its proof.

\begin{lemma}
\label{sup_ret_cherry}
Let $\cN$ be a phylogenetic network on $X$, \blue{where $|X|\ge 3$}, and let $(a, b)$ be a reticulated cherry of $\cN$. Let $p_b$ denote the parent of $b$, and let $A\subseteq X$ such that $A=\{b, x, y\}$. Then $p_b$ is a vertex of $\cN_A$ if and only if $p_b$ is a vertex of at least one of $\cN_{\{b, x\}}$ and $\cN_{\{b, y\}}$.
\end{lemma}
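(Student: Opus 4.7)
The backward direction is immediate. By Lemma~\ref{subset_phynet_lemma}, $\cN_{\{b,x\}}$ is the phylogenetic network exhibited by $\cN_A$ on $\{b,x\}$, and similarly for $\cN_{\{b,y\}}$. Since the construction of such an exhibited network from $\cN_A$ only deletes arcs and vertices or suppresses degree-two vertices, the vertex sets of $(\cN_A)_{\{b,x\}}$ and $(\cN_A)_{\{b,y\}}$ are subsets of that of $\cN_A$. Hence $p_b\in V(\cN_{\{b,x\}})$ or $p_b\in V(\cN_{\{b,y\}})$ immediately implies $p_b\in V(\cN_A)$.

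For the forward direction, assume $p_b\in V(\cN_A)$. Since the out-degree of $p_b$ in $\cN$ is $1$ and its unique child $b$ lies in $A$, $p_b$ remains a reticulation of out-degree $1$ in $\cN_A$, with two distinct parents. My strategy is to exhibit a tree vertex, in either $\cN$ or $\cN_A$, that serves as (the image of) a parent of $p_b$ and is a vertex of $\cN_{\{b,x\}}$ or $\cN_{\{b,y\}}$; then Lemma~\ref{retic_parent_lemma}, applied directly to $\cN$ or to $\cN_A$ via Lemma~\ref{subset_phynet_lemma}, forces $p_b$ into the corresponding smaller exhibit. The argument splits on whether $a\in A$.

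If $a\in A$, then (since $a\ne b$) $a\in\{x,y\}$, and without loss of generality $a=x$. Apply Lemma~\ref{split_desc_lemma} in $\cN$ with $v=p_a$, children $c_1=p_b$ and $c_2=a=x$, and leaves $\ell_1=b$ and $\ell_2=x$: condition (iii) holds because the only leaf-descendant of $p_b$ in $\cN$ is $b$, so $x$ is not a descendant of $p_b$. Thus $p_a\in V(\cN_{\{b,x\}})$, and then Lemma~\ref{retic_parent_lemma} gives $p_b\in V(\cN_{\{b,x\}})$.

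If $a\notin A$, then $p_a$ has out-degree $1$ in the path graph of $\cN$ on $A$ (its only child in that graph is $p_b$), so $p_a$ is suppressed and $p_a\notin V(\cN_A)$. Let $p'$ be the lowest ancestor of $p_a$ in $\cN$ that lies in $V(\cN_A)$, and define $q'$ analogously from the other parent $q$ of $p_b$; then $p'$ and $q'$ are the two parents of $p_b$ in $\cN_A$. When $p'$ is a tree vertex of $\cN_A$ with other child $c'$, I ask which leaves of $A$ are descendants of $c'$ in $\cN_A$: if $x$ (respectively $y$) is such, Lemma~\ref{split_desc_lemma} applied inside $\cN_A$ yields $p'\in V(\cN_{\{b,x\}})$ (respectively $V(\cN_{\{b,y\}})$), and Lemma~\ref{retic_parent_lemma} (via Lemma~\ref{subset_phynet_lemma}) transfers this to $p_b$. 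The crucial sub-case is when $c'$ has only $b$ as a descendant in $A$: then $c'$ is an ancestor of $p_b$ in $\cN_A$, and the terminating arc into $p_b$ along the path from $c'$ must be $(q',p_b)$ rather than $(p',p_b)$ by acyclicity, so $c'$ is in fact an ancestor of $q'$. A symmetric analysis of $q'$ succeeds unless $q'$'s other child $c''$ also has only $b$ as descendant in $A$; but then $c'$ is an ancestor of $c''$ (via $q'$) and $c''$ is an ancestor of $c'$ (via $p'$), contradicting acyclicity of $\cN_A$. The remaining possibility, that $p'$ or $q'$ is itself a reticulation in $\cN_A$, is handled by iterating the same case analysis upward to its parents in $\cN_A$ and chaining Lemma~\ref{retic_parent_lemma} along the resulting reticulation path. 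The hard part is keeping track of which ancestors of $p_b$ are suppressed in $\cN_{\{b,x\}}$ and $\cN_{\{b,y\}}$ while surviving in $\cN_A$, and using acyclicity of $\cN_A$ to rule out the configuration in which $p_b$ would survive in $\cN_A$ but die in both smaller exhibits.
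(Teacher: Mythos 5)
Your backward direction and your $a\in A$ case are correct, and your cycle argument for the sub-case in which both parents $p'$, $q'$ of $p_b$ in $\cN_A$ are tree vertices whose other children have only $b$ as a leaf descendant is a valid (and rather nice) way to rule that configuration out. The gap is in the final step, where you dismiss the possibility that $p'$ or $q'$ is a reticulation by ``iterating the same case analysis upward.'' That iteration does not close. Consider the following fragment of $\cN_A$: a tree vertex $s$ with children $v$ and $w$; $v$ a tree vertex with children $p_b$ and $w$; $w$ a reticulation with parents $s$ and $v$ and child $p_b$; and $x,y$ hanging off the rest of the network above $s$. This is a legitimate piece of an exhibited trinet, and here $p_b$ \emph{does} survive in both $\cN_{\{b,x\}}$ and $\cN_{\{b,y\}}$. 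But your machinery cannot certify this: Lemma~\ref{split_desc_lemma} never fires at $v$ or at $s$ (both children of each have only $b$ as a leaf descendant in $A$), your acyclicity contradiction does not apply (one parent of $p_b$ is a reticulation, and the configuration is genuinely realisable, so there is no contradiction to be had), and Lemma~\ref{retic_parent_lemma} only pushes survival \emph{downward through reticulations}, so it cannot get you past the tree vertices $v$ and $s$ sitting inside this ``only-$b$'' blob. Your recursion reaches a dead end with no base case.

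The missing ingredient is Lemma~\ref{all_desc_lemma}. The paper's proof walks up from $p_b$ along reticulation arcs to the first tree vertex $v$ of $\cN_A$ and then splits into two cases: if $x$ or $y$ is a descendant of $v$, it argues essentially as you do (Lemma~\ref{split_desc_lemma} at $v$, then Lemma~\ref{retic_parent_lemma} down the reticulation chain to $p_b$); if neither is, then $\{b,x\}$ and $\{b,y\}$ each contain every leaf of $\cN_A$ below $v$, so Lemma~\ref{all_desc_lemma} (with Lemma~\ref{subset_phynet_lemma}) immediately gives that $p_b$ survives in \emph{both} $\cN_{\{b,x\}}$ and $\cN_{\{b,y\}}$. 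Your proof treats this second case as something to be contradicted away, but it is a genuinely occurring case and has to be handled affirmatively. Adding that case (and stating the walk up the reticulation chain explicitly, rather than the two-parent bookkeeping with $p'$, $q'$, $c'$, $c''$) would repair the argument and in fact collapse it onto the paper's proof.
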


\begin{proof}
First suppose that $p_b$ is a vertex of $\cN_A$. Let $v$ be a tree vertex (possibly the root) of $\cN_A$ with the property that there is a path \blue{$P$} from $v$ to $p_b$ such that every vertex on this path (except $v$ itself) is a reticulation. Note that such a vertex can be found by starting at $p_b$ and moving along reticulation arcs towards the root of $\cN_A$. If neither $x$ nor $y$ is a descendant of $v$, then, by Lemmas~\ref{all_desc_lemma} and~\ref{subset_phynet_lemma}, $p_b$ is a vertex of both $\cN_{\{b, x\}}$ and $\cN_{\{b, y\}}$. Therefore, without loss of generality, we may assume $x$ is descendant of $v$ in $\cN_A$. \blue{Let $w$ denote} the first reticulation along \blue{$P$, and note that $w$ could be $p_b$. Since} the only leaf descendant of \blue{$w$} is $b$, \blue{it follows} by Lemma~\ref{split_desc_lemma} \blue{that} $v$ is a vertex of $\cN_{\{b, x\}}$. By repeated applications of Lemma~\ref{retic_parent_lemma} \blue{to the reticulations along $P$, we deduce that} $p_b$ is also a vertex of $\cN_{\{b, x\}}$.

Now suppose that $p_b$ is a vertex of $\cN_{\{b, z\}}$, where $z\in \{x, y\}$. By Lemma~\ref{subset_phynet_lemma}, the phylogenetic network $\cN_{\{b, z\}}$ is the phylogenetic network exhibited by $\cN_A$ on $\{b, z\}$, and so $p_b$ is a vertex of $\cN_A$.
\end{proof}

\section{Cherry and Reticulated-Cherry Lemmas}
\label{exhibit2}

The lemmas in this section are more aligned with orchard networks. We begin by showing that orchard networks are recoverable.

\begin{lemma}
\label{orchard_recoverable}
Let $\mathcal N$ be an orchard network on $X$. Then $\mathcal N$ is recoverable.
\end{lemma}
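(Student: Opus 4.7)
The plan is to induct on $|X|+r(\cN)$, where $r(\cN)$ denotes the number of reticulations of $\cN$. The base case $|X|+r(\cN)=1$ forces $|X|=1$, in which case $\cN$ is a single vertex and is trivially recoverable. For the inductive step, since $\cN$ is orchard and is not already a single vertex, it contains either a cherry or a reticulated cherry. Picking one yields, by Lemma~\ref{order}, an orchard network $\cN'$ with strictly smaller $|X'|+r(\cN')$; by the inductive hypothesis, $\cN'$ is recoverable. I will argue by contradiction that $\cN$ must be recoverable as well.

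Suppose $v$ is a stable ancestor of $X$ in $\cN$ that is not the root. The strategy is to show that $v$ survives as a vertex of $\cN'$, is not the root of $\cN'$, and is a stable ancestor of the leaf set of $\cN'$, contradicting recoverability of $\cN'$.

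For each operation, I first rule out $v$ coinciding with a suppressed vertex. If $\cN'$ is obtained by reducing the leaf $b$ of a cherry $\{a,b\}$, the only suppressed vertex is $p_a$; but the leaf descendants of $p_a$ are only $\{a,b\}$, so $v=p_a$ would force $|X|\le 2$, and a short in-degree argument shows that in any phylogenetic network with $|X|=2$ containing a cherry $\{a,b\}$ the vertex $p_a$ must be the root, contradicting the choice of $v$. If $\cN'$ is obtained by cutting a reticulated cherry $(a,b)$, the suppressed vertices are $p_a$ and $p_b$. The descendants of $p_b$ are just $\{p_b,b\}$, so $v=p_b$ forces $|X|=1$, a base case. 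For $v=p_a$, the descendants of $p_a$ are $\{p_a,a,p_b,b\}$, so the other parent $q$ of $p_b$ cannot lie below $p_a$; hence there is a root-to-$q$ path avoiding $p_a$, which extends to a root-to-$b$ path through $q\to p_b\to b$ avoiding $p_a$, so $p_a$ is not a stable ancestor of $b$.

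With $v$ persisting in $\cN'$ and the root of $\cN'$ equal to the root of $\cN$, every path in $\cN'$ from the root to a leaf $x$ lifts to a path in $\cN$ (by reinstating any suppressed vertices) that traverses $v$ by hypothesis; as $v$ is not suppressed, the original $\cN'$-path also traverses $v$. Thus $v$ is a stable ancestor of the leaf set of $\cN'$ distinct from its root, contradicting the inductive hypothesis. The main obstacle is the structural argument ruling out $v=p_a$ in the reticulated-cherry case, which hinges on the fact that the other parent $q$ of $p_b$ cannot be a descendant of $p_a$; once this is in place, the rest is a routine path-lifting verification.
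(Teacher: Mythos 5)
Your overall strategy is the same as the paper's: induct on $|X|+r$, pick a cherry or reticulated cherry, invoke Lemma~\ref{order} to get an orchard network $\cN'$ that is recoverable by induction, and show that a non-root stable ancestor $v$ of $X$ in $\cN$ would survive as a non-root stable ancestor in $\cN'$. In fact you are more explicit than the paper in ruling out $v\in\{p_a,p_b\}$, and your treatment of the reticulated-cherry case (the other parent $q$ of $p_b$ cannot be a descendant of $p_a$, so a root-to-$q$ path avoiding $p_a$ extends to a root-to-$b$ path avoiding $p_a$) is correct.

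However, one step fails as stated: the claim that ``in any phylogenetic network with $|X|=2$ containing a cherry $\{a,b\}$ the vertex $p_a$ must be the root'' is false, and no in-degree count will rescue it. For a counterexample, take a root $\rho$ with tree-vertex children $u_1,u_2$, let $w_1,w_2$ be reticulations each with parents $u_1$ and $u_2$, let $w_3$ be a reticulation with parents $w_1$ and $w_2$ and unique child $p_a$, and let $p_a$ have children $a$ and $b$. This is a valid binary phylogenetic network on $\{a,b\}$ with cherry $\{a,b\}$ and $p_a\neq\rho$ (the degree bookkeeping works out: such networks exist whenever the number of reticulations exceeds the number of non-root tree vertices above $p_a$ by one). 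The configuration you need to exclude really is impossible, but only because $\cN$ is \emph{orchard}: if $|X|=2$, $\{a,b\}$ is a cherry and $p_a\neq\rho$, then reducing $b$ yields, by Lemma~\ref{order}, an orchard network on one leaf that is not a single vertex; a one-leaf network admits no cherry or reticulated cherry, so it cannot be reduced to a single vertex, a contradiction. (Equivalently, you could follow the paper and absorb $|X|=2$ into the base case by first observing that an orchard network on two leaves with a cherry must consist of two leaves adjoined to the root.) With that repair the proof goes through; everything else in your argument is sound.
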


\begin{proof}
Let $\rho$ denote the root of $\cN$. The proof is by induction on the sum of the number $n=|X|$ of leaves and the number $r$ of reticulations \blue{of $\cN$}. If $n+r=1$, then $\cN$ has exactly one leaf and no reticulations. Thus $\cN$ consists of the single vertex in $X$, and so the lemma holds. If $n+r=2$, then, as $\cN$ is orchard, $\cN$ consists of two leaves adjoined to $\rho$. Again, the lemma holds.
		
Now suppose that $n+r\ge 3$, and that every orchard network in which the sum of the number of leaves and the number of reticulations is at most $n+r-1$ is recoverable. Let $\cN'$ be a phylogenetic network on $X'$ that is obtained from $\cN$ by picking either a cherry $\{a, b\}$ or a reticulated cherry $(a, b)$. Note that the roots of $\cN$ and $\cN'$ coincide as $\cN$ does not consist of two leaves adjoined to the root. By Lemma~\ref{order}, $\cN'$ is orchard. Therefore, as the sum of the number of leaves and number of reticulations \blue{of $\cN'$} is $n+r-1$, it follows by the induction assumption that $\cN'$ is recoverable. That is, the root of $\cN'$ is the unique stable ancestor of $X'$ in $\cN'$. As the roots of $\cN$ and $\cN'$ coincide, up to traversing $p_a$ and $p_b$ (the parents of $a$ and $b$, respectively, in $\cN$), every path in $\cN'$ from the root to a leaf $x$ in $X'$ is also a path in $\cN$ from $\rho$ to $x$. It follows that $\rho$ is the unique stable ancestor of $X$ in $\cN$, and so $\cN$ is recoverable.
\end{proof}

\begin{lemma}
\label{cherry_pick_recoverable}
Let $\mathcal N$ be a (arbitrary) recoverable phylogenetic network, and suppose that $\mathcal N'$ is obtained from $\cN$ by picking \blue{either} a cherry or a reticulated cherry. Then $\mathcal N'$ is recoverable.
\end{lemma}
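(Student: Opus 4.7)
The plan is to show $\cN'$ is recoverable by verifying that its root is the lowest stable ancestor of its leaf set $X'$. Denote the root of $\cN$ by $\rho$. I would split into two cases according to whether $\cN'$ is obtained by picking a cherry $\{a,b\}$ or a reticulated cherry $(a,b)$.

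For the cherry case, if $|X|\le 2$ then $\cN'$ consists of a single vertex and the claim is immediate. Otherwise $p_a$ is not the root (its only leaf descendants in $\cN$ are $a$ and $b$), so $\cN'$ has root $\rho$ and leaf set $X'=X-\{b\}$. Let $v$ be a stable ancestor of $X'$ in $\cN'$. For $x\in X-\{a,b\}$, paths from $\rho$ to $x$ in $\cN$ avoid $p_a$ and thus coincide with their counterparts in $\cN'$, so they traverse $v$. Writing $\alpha$ for the unique parent of $p_a$, every path from $\rho$ to $a$ or to $b$ in $\cN$ has the form $\pi\to p_a\to a$ or $\pi\to p_a\to b$ with $\pi$ a path from $\rho$ to $\alpha$; the suppressed path $\pi\to a$ in $\cN'$ must traverse $v$, and since $|X'|\ge 2$ forces $v\ne a$, we see $v$ lies on every such $\pi$ and hence on every path in $\cN$ to $a$ or $b$. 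Thus $v$ is a stable ancestor of $X$ in $\cN$, and recoverability of $\cN$ forces $v=\rho$.

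For the reticulated cherry case, neither $p_a$ nor $p_b$ is the root of $\cN$: $p_b$ has in-degree two, and if $p_a$ were the root then the second parent $q$ of $p_b$ would have to be reachable from $p_a$ via $p_b$, creating a cycle. Hence $\cN'$ still has root $\rho$ and $X'=X$. Suppose $v$ is a stable ancestor of $X$ in $\cN'$, and let $P$ be any path from $\rho$ to some $x\in X$ in $\cN$. If $P$ does not use the deleted arc $(p_a,p_b)$, then suppressing $p_a$ and $p_b$ wherever they appear on $P$ produces a path in $\cN'$ traversing $v$; since $v\notin\{p_a,p_b\}$, the path $P$ also traverses $v$. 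Otherwise $P=\pi\to p_a\to p_b\to c\to\cdots\to x$, where $\pi$ ends at the parent $\alpha$ of $p_a$ and $c$ is the child of $p_b$. Acyclicity of $\cN$ forces $\pi$ to avoid both $p_a$ and $p_b$, so $\pi\to a$ is a path in $\cN'$ from $\rho$ to $a$; it must traverse $v$, and since $|X|\ge 2$ forces $v\ne a$, $v$ lies on $\pi$, and thus on $P$. Hence $v$ is a stable ancestor of $X$ in $\cN$, so $v=\rho$.

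The main subtlety will be handling paths in $\cN$ that use the deleted arc $(p_a,p_b)$ in the reticulated cherry case, since such paths have no direct analogue in $\cN'$. The resolution is the auxiliary $\cN'$-path $\pi\to a$ obtained by suppressing $p_a$: acyclicity of $\cN$ guarantees that $\pi$ is itself a valid path in $\cN'$, so it can be extended by the new arc $(\alpha,a)$ to a full path in $\cN'$ whose prefix coincides with the prefix of $P$ up to $\alpha$.
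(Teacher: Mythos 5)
Your proof is correct and is essentially the contrapositive of the paper's argument: both rest on lifting root-to-leaf paths between $\cN$ and $\cN'$, with the only delicate case being a path that uses the deleted reticulation arc $(p_a,p_b)$, which both proofs resolve by rerouting its prefix through the new arc into $a$ to get a path in $\cN'$. The case analysis (roots coinciding, $v\neq a$, etc.) matches the paper's, so no issues.
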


\begin{proof}
Let $X'$ denote the leaf set of $\cN'$, and let $\{a, b\}$ or $(a, b)$ be the cherry or reticulated cherry of $\cN$ that is picked to obtain $\cN'$. \blue{Observe that we may assume the roots of $\cN$ and $\cN'$ coincide; otherwise, $\cN'$ consists of a single vertex and the lemma holds.} Suppose that $\mathcal N'$ is not recoverable. Then there is a non-root vertex $v'$ of $\cN'$ that is a stable ancestor of $X'$. Consider $v'$ in $\mathcal N$. Since $\mathcal N$ is recoverable, there must be a path $P$ from the root of $\cN$ to a leaf that does not traverse $v'$. As $\cN'$ is obtained from $\cN$ by picking either $\{a, b\}$ or $(a, b)$, this path $P$ must end at $b$. It follows that $\cN'$ is obtained from $\cN$ by picking $(a, b)$ and $P$ traverses $(p_a, p_b)$. But this implies there is a path in $\cN'$ from the root of $\cN'$ to $a$ that does not traverse $v'$, contradicting that \blue{$v'$ is a stable ancestor of $X'$}. Hence $\mathcal N'$ is recoverable.
\end{proof}

Let $\cN$ be a phylogenetic network on $X$, and let $\{a, b\}\subseteq X$. If $\{a, b\}$ is a cherry of $\cN$, we refer to $p_a$ as the {\em tree vertex of $\{a, b\}$}, while if $(a, b)$ is a reticulated cherry, we refer to $p_a$ as the {\em tree vertex of $(a, b)$}.

\begin{lemma}
\label{trinet_cherry_lemma}
Let $\cN$ be a phylogenetic network on $X$, \blue{where $|X|\ge 3$}, and let $\{a, b\}\subseteq X$. Then
\begin{enumerate}[{\rm (i)}]
\item $\{a, b\}$ is a cherry of $\cN$ if and only if, for all $A$ with $\{a, b\}\subseteq A\subseteq X$ and $|A|=3$, we have that $\{a, b\}$ is a cherry of $\cN_A$, and

\item $(a, b)$ is a reticulated cherry of $\cN$ if and only if, for all $A$ with $\{a, b\}\subseteq A\subseteq X$ and $|A|=3$, we have that $(a, b)$ is a reticulated cherry of $\cN_A$.
\end{enumerate}
\end{lemma}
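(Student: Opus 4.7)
The plan is to treat each direction of each part separately. Both forward directions are straightforward and proceed by verifying that the relevant vertices and arcs survive the construction of $G_A$ and its subsequent simplification. For (i), if $\{a,b\}$ is a cherry in $\cN$ with common parent $p$, then the edges $p\to a$ and $p\to b$ both lie in $G_A$, giving $p$ out-degree $2$ and in-degree $1$; hence $p$ is never suppressed and remains the common parent in $\cN_A$. For (ii), if $(a,b)$ is a reticulated cherry of $\cN$ with tree vertex $p_a$ and reticulation $p_b$, then both parents of $p_b$ lie on paths from $\lsa(A)$ to $b$ and hence belong to $G_A$; so $p_b$ has in-degree $2$ and out-degree $1$ in $G_A$, $p_a$ has out-degree $2$, neither is suppressed, and the arc $(p_a,p_b)$ is preserved as a reticulation arc of $\cN_A$.

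For the backward direction of (i), I assume $\{a,b\}$ is a cherry in every $\cN_{\{a,b,c\}}$ and suppose for contradiction that $p_a\ne p_b$. A quick degree argument forces both $p_a$ and $p_b$ to be tree vertices: a reticulation at $p_a$ would persist as a reticulation in every $\cN_A$ and, having only one child, could not serve as the parent of $b$ in addition to $a$. Let $s_a$ and $s_b$ denote the other children. The key claim is that $p_a$ is an ancestor of $b$ in $\cN$. By Lemma~\ref{split_desc_lemma}, if $s_a$ has a leaf descendant $c\in X\setminus\{a,b\}$, then $p_a$ is a vertex of $\cN_{\{a,b,c\}}$ and is therefore the parent of $a$ there, so by the cherry hypothesis $p_a$ is also the parent of $b$ in $\cN_{\{a,b,c\}}$, meaning $p_a$ is an ancestor of $b$ in $\cN$; otherwise $s_a$ has no leaf descendant outside $\{a,b\}$ and, not being an ancestor of $a$ (by acyclicity), must have $b$ as its only leaf descendant, again giving $p_a$ as an ancestor of $b$. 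Symmetrically, $p_b$ is an ancestor of $a$, producing a directed cycle between $p_a$ and $p_b$ in the DAG $\cN$, a contradiction.

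For the backward direction of (ii), assume $(a,b)$ is a reticulated cherry in every $\cN_A$. Since a reticulated cherry is not a simple cherry, part (i) gives $p_a\ne p_b$, and analogous degree and descendant arguments force $p_a$ to be a tree vertex and $p_b$ to be a reticulation (if $p_b$ were a tree vertex, then $s_b$ would have no leaf descendant at all, an impossibility). To show $(p_a,p_b)$ is an arc of $\cN$, suppose otherwise; then the edge $p_a^A\to p_b$ required by the reticulated-cherry structure in $\cN_A$ must arise from suppression of a chain $p_a\to s_a\to\cdots\to p_b$ in $G_A$. For any tree vertex on this chain, a leaf descendant of its off-chain child lying outside $\{a,b\}$ (which must exist, else that child has no leaf descendants) gives a $c$ for which Lemma~\ref{split_desc_lemma} keeps the chain vertex alive in $\cN_A$, so the required direct edge cannot form in $\cN_A$, contradicting the reticulated-cherry hypothesis; for a reticulation on the chain, I recurse via Lemma~\ref{retic_parent_lemma} on its second parent, and because $\cN$ is finite this recursion terminates at a tree vertex where the same contradiction applies. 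The main obstacle is precisely this chain analysis in (ii): handling arbitrarily long alternations of reticulations before reaching a tree vertex whose off-chain descendant yields the required $c$ is the technically most delicate step of the proof.
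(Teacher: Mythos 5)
Your forward directions are essentially sound (the paper obtains them in one line from Lemma~\ref{all_desc_lemma} applied to $p_a$, whose only leaf descendants are $a$ and $b$), and your converse directions take a genuinely different route from the paper's: you analyse the structure of $p_a$ and $p_b$ in $\cN$ directly, whereas the paper argues from the \emph{lowest} vertex that serves as the tree vertex of the (reticulated) cherry in some exhibited trinet and produces a third leaf $\ell$ below it that destroys the cherry in $\cN_{\{a,b,\ell\}}$. Your route has a genuine gap: both converse arguments rest on the claim that ``a reticulation at $p_a$ would persist as a reticulation in every $\cN_A$,'' and this is false. A reticulation $v$ of $\cN$ can vanish in $\cN_A$: if both parents of $v$ acquire out-degree one in $G_A$ (because their other children have no leaf descendants in $A$) and are suppressed, the two arcs into $v$ can become parallel, one is deleted, and then $v$ itself has in-degree one and out-degree one and is suppressed. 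Concretely, if the parents $q_1,q_2$ of $p_a$ are tree vertices whose other children lead only to leaves outside $A$ and $q_1,q_2$ share a parent $m$, the parent of $a$ in $\cN_A$ is the \emph{tree vertex} $m$. So the hypothesis does not immediately exclude $p_a$ being a reticulation, and your cycle argument, which needs the second child $s_a$, never gets started in that case. The case can be repaired --- take a maximal ancestor $u$ of $p_a$ whose only leaf descendant is $a$; any parent $q$ of $u$ has a leaf descendant $d\neq a$, Lemma~\ref{split_desc_lemma} places $q$ in $\cN_{\{a,b,d\}}$ (or in $\cN_{\{a,b,c\}}$ for arbitrary $c$ when $d=b$), and Lemmas~\ref{retic_parent_lemma} and~\ref{all_desc_lemma} then force $p_a$ into that trinet as a reticulation parent of $a$ --- but none of this is in your write-up, and it is load-bearing for both (i) and (ii).

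There are two further soft spots in your converse of (ii). To rule out $p_b$ being a tree vertex you assert that $s_b$ ``would have no leaf descendant at all''; acyclicity only excludes $b$, so $a$ can be the unique leaf descendant of $s_b$, and you should instead invoke Lemma~\ref{split_desc_lemma} with $\ell_1=a$ to keep $p_b$ alive as a tree-vertex parent of $b$ in every trinet. Likewise, your chain analysis presupposes that the tree vertex and reticulation of the reticulated cherry in $\cN_A$ are $p_a$ and $p_b$ themselves (which requires showing they survive the simplification) and that off-chain children of chain vertices have a leaf descendant outside $\{a,b\}$ (they may have $b$ as their only leaf descendant). Each of these can be patched with the same lemmas, but as written the argument does not close; the paper's lowest-candidate argument is designed precisely to avoid this cascade of case distinctions.
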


\begin{proof}
\blue{We will prove (ii). The proof of (i) closely follows the proof of (ii) and is omitted. Let $A\subseteq X$ such that $\{a, b\}\subseteq A$ and $|A|=3$.}
If $(a, b)$ is a reticulated cherry of $\cN$, then $p_a$ satisfies the conditions of Lemma~\ref{all_desc_lemma}. Thus $a$, $b$, $p_a$, and $p_b$ are all vertices of $\cN_A$, and so $(a, b)$ is a reticulated cherry of $\cN_A$.

For the converse of (ii), suppose that $(a, b)$ is not a reticulated cherry of $\cN$. We will show that there is a trinet exhibited by $\cN$ whose leaf set contains $a$ and $b$, but $(a, b)$ is not a reticulated cherry of this trinet. \blue{If there is no trinet exhibited by $\cN$ in which $(a, b)$ is a reticulated cherry, then the desired outcome holds. So we may assume that there exists a trinet $\cN_A$ exhibited by $\cN$ in which} $(a, b)$ is a reticulated cherry. Let $u$ be the tree vertex of $(a, b)$ of $\cN_A$. In $\cN$, the vertex $u$ is a tree vertex in which $a$ is a descendant. Since $u$ is stable ancestor of $a$ in $\cN_A$, it follows by Lemma~\ref{traverse2} that every path from the root of $\cN$ to $a$ traverses $u$. Thus, if $(a, b)$ is a reticulated cherry of another trinet exhibited by $\cN$ and $u'$ is the tree vertex of $(a, b)$ \blue{of this trinet}, then $u$ is either an ancestor or a descendant of $u'$ in $\cN$. It now follows that there is a path $P$ in $\cN$ from the root of $\cN$ to $a$ containing every vertex that is the tree vertex of $(a, b)$ of a trinet exhibited by $\cN$ in which $(a, b)$ is a reticulated cherry.

Let $v$ denote the last such tree vertex along $P$. If $a$ and $b$ are the only leaf descendants of $v$ in $\cN$, then, by Lemma~\ref{all_desc_lemma}, for any choice of $A$ containing $a$ and $b$, all descendant vertices of $v$ \blue{in $\cN$} are vertices of the trinet exhibited by $\cN$ on $A$. But $(a, b)$ is not a reticulated cherry of $\cN$, so $v$ is not the tree vertex of any trinet exhibited by $\cN$ in which $(a, b)$ is a reticulated cherry, a contradiction. Therefore, \blue{in $\cN$, the vertex} $v$ has a leaf descendant, say $\ell$, other than $a$ and $b$. Consider the trinet exhibited by $\cN$ on $\{a, b, \ell\}$. If $v$ is not a vertex of the path graph of $\cN$ on $\{a, b, \ell\}$, then $\lsa(\{a, b, \ell\})$ is a descendant of $v$ in $\cN$, and so, by the choice of $v$, the \blue{ordered pair} $(a, b)$ is not a reticulated cherry of $\cN_{\{a, b, \ell\}}$, and we have the desired outcome. Thus we may assume that $v$ is a vertex of the path graph of $\cN$ on $\{a, b, \ell\}$. If $v$ is a vertex of $\cN_{\{a, b, \ell\}}$, then $a$, $b$, and $\ell$ are descendants of $v$ in $\cN_{\{a, b, \ell\}}$. Therefore, if $(a, b)$ is a reticulated cherry of $\cN_{\{a, b, \ell\}}$, then $v$ is not its tree vertex. But every other possible such tree vertex is an ancestor of $v$ in $\cN$. Hence, $(a, b)$ is not a reticulated cherry of $\cN_{\{a, b, \ell\}}$. The final case to consider is when $v$ is suppressed in the process of obtaining $\cN_{\{a, b, \ell\}}$ from the path graph of $\cN$ on $\{a, b, \ell\}$. Then the unique child of $v$ in this step of the process or a descendant of \blue{this} child is a vertex of $\cN_{\{a, b, \ell\}}$ and has $a$, $b$, and $\ell$ as descendants. But every vertex which is a tree vertex of $(a, b)$ in some trinet exhibited by $\cN$ is an ancestor of this descendant of $v$, so $(a, b)$ is not a reticulated cherry of $\cN_{\{a, b, \ell\}}$. This completes the proof of the converse of (ii), and thus the lemma.
\end{proof}

\begin{lemma}
\label{cherry_construct_trinet}
Let $\cN$ be a phylogenetic network on $X$, and let $\{a, b\}$ be a cherry of $\cN$. Let $\cN'$ be the phylogenetic network obtained from $\cN$ by reducing $b$, and suppose that $A\subseteq X-\{b\}$. Then $\cN_A = \cN'_A$.
\end{lemma}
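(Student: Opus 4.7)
The plan is to compare the path graph $G_A$ of $\cN$ on $A$ directly with the path graph $G'_A$ of $\cN'$ on $A$, and show that they differ only by the possible presence of the vertex $p_a$ which, when present, has in-degree one and out-degree one and is therefore suppressed during full simplification. Hence $G_A$ and $G'_A$ admit the same full simplification, giving $\cN_A = \cN'_A$.

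First I would record the effect of reducing $b$. Let $q$ denote the (unique) parent of $p_a$ in $\cN$; the degenerate case in which $p_a$ is the root forces $X = \{a, b\}$, so $A \subseteq \{a\}$ and the claim is immediate. Then $\cN'$ is obtained from $\cN$ by deleting $b$ along with the arc $(p_a, b)$, deleting $p_a$ along with the arcs $(q, p_a)$ and $(p_a, a)$, and adding a new arc $(q, a)$. Consequently, every vertex of $\cN$ other than $p_a$ and $b$ is a vertex of $\cN'$, and paths in $\cN$ between such vertices correspond bijectively to paths in $\cN'$ via the suppression.

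Next I would verify that the lowest stable ancestor of $A$ is the same in $\cN$ and $\cN'$. The key observation is that $p_a$ itself is never equal to $\lsa(A)$ in $\cN$: its only leaf descendants are $a$ and $b$, so $p_a$ being a stable ancestor of $A$ would force $A \subseteq \{a\}$ (since $b \notin A$), but then $a$ is a lower stable ancestor. For any other vertex $u$ and any $x \in A$, a path from the root to $x$ in $\cN$ traverses $u$ if and only if the corresponding path in $\cN'$ traverses $u$, because no such path uses the arc $(p_a, b)$. Hence the lowest stable ancestors of $A$ in $\cN$ and in $\cN'$ coincide.

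Finally I would split into cases on whether $a \in A$. If $a \notin A$, then $a$, $p_a$, and $b$ are ancestors only of leaves in $\{a, b\}$, which is disjoint from $A$, so none of them appears in either $G_A$ or $G'_A$; the remaining vertices and arcs agree, and $G_A = G'_A$. If $a \in A$, then $G_A$ contains $p_a$ together with the arcs $(q, p_a)$ and $(p_a, a)$, while $G'_A$ contains in their place the single arc $(q, a)$; all other vertices and arcs of $G_A$ and $G'_A$ are identical. In $G_A$ the vertex $p_a$ has in-degree one (from $q$) and out-degree one (to $a$, since $(p_a, b)$ is absent as $b \notin A$), so suppressing $p_a$ in $G_A$ produces exactly $G'_A$. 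Either way $G_A$ and $G'_A$ admit the same full simplification, and the lemma follows. The main subtlety will be the careful bookkeeping showing that $\lsa(A)$ persists across the reduction and that the path graphs differ only at $p_a$; once this is pinned down, the remainder is a direct verification.
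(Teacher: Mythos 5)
Your proposal is correct and follows essentially the same route as the paper's proof: both compare the path graphs $G_A$ and $G'_A$ directly, observe that $\lsa(A)$ is unchanged by the reduction, and split on whether $a\in A$, with $G'_A$ equal to $G_A$ in one case and obtained from it by suppressing the in-degree-one, out-degree-one vertex $p_a$ in the other. The extra detail you supply (the degenerate case where $p_a$ is the root, and the explicit check that $p_a$ is never $\lsa(A)$) only elaborates steps the paper states without proof.
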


\begin{proof}
First observe that $\lsa(A)$ of $\cN$ is also $\lsa(A)$ of $\cN'$. Clearly the lemma holds if $|A|=1$, so we may assume that $|A|\ge 2$. Let $G_A$ be the path graph of $\cN$ on $A$, and let $G'_A$ be the path graph of $\cN'$ on $A$. Let $\ell\in A$, where $\ell\neq a$. Then every path in $\cN$ from $\lsa(A)$ to $\ell$ is a path in $\cN'$ from $\lsa(A)$ to $\ell$. Therefore, if $a\not\in A$, the path graph $G'_A$ is identical to $G_A$, and so $\cN_A=\cN'_A$. On the other hand, if $a\in A$, then every path in $\cN$ from $\lsa(A)$ to $a$ traverses $p_a$, and so suppressing $p_a$ in such a path produces a path in $\cN'$ from $\lsa(A)$ to $a$. Moreover, all paths in $\cN'$ from $\lsa(A)$ to $a$ can be obtained in this way. Thus, in $G_A$, the vertex $p_a$ has in-degree one and out-degree one, and so $G'_A$ is obtained from $G_A$ by suppressing $p_a$. \blue{It follows that} $\cN_A=\cN'_A$.
\end{proof}

\begin{lemma}
\label{ret_construct_trinet}
Let $\cN$ be a phylogenetic network on $X$, and let $(a, b)$ be a reticulated cherry of $\cN$. Let $p_a$ and $p_b$ denote the parents of $a$ and $b$, respectively, in $\cN$. Let $\cN'$ be the phylogenetic network obtained from $\cN$ by cutting $(a, b)$, and suppose that $A\subseteq X$. Then each of the following holds:
\begin{enumerate}[{\rm (i)}]
\item If $b\not\in A$, then $\cN_A=\cN'_A$.

\item If $a, b\in A$, then $\cN'_A$ is obtained from $\cN_A$ by deleting $(p_a, p_b)$ and suppressing $p_a$ and $p_b$.

\item If $a\not\in A$, $b\in A$, and $p_b$ is not a vertex of $\cN_A$, then $\cN_A=\cN'_A$.

\item If $a\not\in A$, $b\in A$, and $p_b$ is a vertex of $\cN_A$, then $\cN'_A$ is obtained from $\cN_A$ by
\begin{enumerate}[{\rm (I)}]
\item deleting the arc $(u, p_b)$, where $u$ is a vertex such that there is a path in $\cN$ from $u$ to $p_b$ traversing $p_a$, and every non-terminal vertex along this path is not a vertex of $\cN_A$,

\item repeatedly deleting non-leaf vertices of out-degree zero until there are no such vertices, and

\item taking the full simplification of the resulting directed graph.
\end{enumerate}
\end{enumerate}
\end{lemma}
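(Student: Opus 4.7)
My plan is to analyze the four cases by tracking how the cutting operation---which removes the arc $(p_a,p_b)$ and suppresses $p_a$ and $p_b$---affects the path graph $G_A$ of $\cN$ on $A$ and its full simplification $\cN_A$. Let $q$ denote the parent of $p_b$ in $\cN$ distinct from $p_a$, so the incoming arcs of $p_b$ in $\cN$ are $(p_a,p_b)$ and $(q,p_b)$, and the only child of $p_b$ is $b$. Since these are the only arcs that differ between $\cN$ and $\cN'$, the differences between $G_A$ and the path graph $G'_A$ of $\cN'$ on $A$ are localised around $p_a$ and $p_b$, so the proof reduces to tracking those local differences in each case.

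For (i), since $b$ is the only leaf-descendant of $p_b$ and $b \notin A$, no path from $\lsa(A)$ to a leaf in $A$ traverses $p_b$; so $p_b$ and the arc $(p_a,p_b)$ are absent from $G_A$. If $a \notin A$, then $p_a$ is likewise absent and $G_A = G'_A$; if $a \in A$, then $p_a$ appears in $G_A$ with in-degree one and out-degree one (its only relevant child being $a$) and is suppressed in the full simplification, mirroring the fact that $p_a$ has already been suppressed in $\cN'$. Either way, $\cN_A = \cN'_A$. For (ii), Lemma~\ref{split_desc_lemma} applied to $p_a$ (with children $a$ and $p_b$ and leaf descendants $a, b \in A$) together with Lemma~\ref{retic_parent_lemma} applied to $p_b$ shows that both $p_a$ and $p_b$ are vertices of $\cN_A$; moreover, $(p_a,p_b)$ is an arc of $\cN_A$, since it is the unique route from $p_a$ to $p_b$ in $G_A$ and the other in-arc of $p_b$ comes from a vertex distinct from $p_a$, preventing parallel-arc deletion. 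I would then verify directly---by matching paths from the common root $\lsa(A)$ to each leaf in $A$---that deleting $(p_a,p_b)$ from $\cN_A$ and suppressing $p_a,p_b$ yields the full simplification of $G'_A$.

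For (iii) and (iv), both with $a \notin A$ and $b \in A$, the vertex $p_a$ has in-degree one and out-degree one in $G_A$ (its $a$-child being absent) and is therefore suppressed. Hence the $p_a$-to-$p_b$ arc is replaced during simplification of $G_A$ by an arc $(u, p_b)$, where $u$ is the first ancestor of $p_a$ in $\cN$ that survives as a vertex of $\cN_A$---precisely the vertex described in clause (I). In (iii), since $p_b$ is not a vertex of $\cN_A$, this substitution must produce, at some intermediate stage, two parallel in-arcs into $p_b$ (one inherited from the $p_a$-side, one from the $q$-side, both ultimately emanating from the same surviving ancestor), and after parallel-arc deletion and suppression of $p_b$ the resulting simplified graph coincides with the one obtained from $\cN'$; thus $\cN_A = \cN'_A$. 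In (iv), $p_b$ survives in $\cN_A$ as a reticulation with in-arcs from $u$ (the $p_a$-side) and from some vertex on the $q$-side; cutting $(a,b)$ in $\cN$ eliminates the entire $p_a$-route, which at the $\cN_A$-level is exactly the deletion of $(u,p_b)$ in (I). That deletion may leave ancestors of $u$ with out-degree zero (their only remaining path to $A$ having been through $u$ and $p_b$ to $b$), which step (II) prunes, and the subsequent full simplification (III) then suppresses $p_b$, now of in-degree and out-degree one, and resolves any residual parallel arcs.

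The main obstacle is case (iv): rigorously identifying $u$ and verifying that sequentially performing (I)--(III) on $\cN_A$ reproduces the full simplification of $G'_A$. I expect to handle this by induction on the length of the directed path from $u$ through $p_a$ to $p_b$ in $\cN$, showing that each suppression along this path in the $\cN$-simplification corresponds to an identical suppression (or is already absent) in the $\cN'$-simplification, and using Lemma~\ref{sup_ret_cherry} together with Lemmas~\ref{traverse1} and~\ref{traverse2} to control which intermediate vertices persist.
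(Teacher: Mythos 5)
Your case analysis, your identification of where $G_A$ and $G'_A$ differ, and your characterisation of $u$ as the nearest surviving ancestor of $p_a$ all match the paper, and your treatment of (i) is essentially complete. However, the core of the lemma --- showing that the \emph{full simplifications} of two graphs differing in one arc are related exactly as claimed --- is precisely the part you defer (``verify directly by matching paths'' in (ii), and the acknowledged ``main obstacle'' in (iv)), and the induction you propose for (iv), on the length of the path from $u$ through $p_a$ to $p_b$, does not obviously supply it. The difficulty is that full simplification is a sequence of many suppressions and parallel-arc deletions performed independently on each graph, and you need to control how the entire sequence on $G_A$ relates to the entire sequence on $G'_A$, not just what happens along the $u$-to-$p_b$ path; a priori you would also have to argue that the result is independent of the order of operations.

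The paper's proof supplies exactly this missing device: it runs the two simplifications in lockstep. Setting $H^0_A = G_A - (p_a,p_b)$, it repeatedly chooses a vertex $u_i \neq p_a$ of in-degree one and out-degree one in $G^i_A$, suppresses $u_i$ in both $G^i_A$ and $H^i_A$ simultaneously, and maintains the invariant that $H^i_A$ is obtained from $G^i_A$ by deleting $(p_a,p_b)$ while $\cN'_A$ remains obtainable from $H^i_A$ by the cleanup operations of (II)--(III). Deferring the suppression of $p_a$ to the very last step is what makes the invariant easy to preserve (only the degrees of $p_a$ and $p_b$ are affected by the deleted arc), and the terminal case analysis --- whether the final suppression of $p_a$ creates a pair of parallel arcs into $p_b$ or not --- is exactly what separates (iii) from (iv). Without some synchronisation of this kind, your local descriptions of what ``must'' happen at $p_b$ (e.g.\ that (iii) forces parallel in-arcs at some intermediate stage) remain plausible but unproven. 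I recommend you restructure the argument around such an invariant rather than around the path from $u$ to $p_b$.
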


\begin{proof}
Let $A\subseteq X$. Let $G_A$ be the path graph of $\cN$ on $A$, and let $G'_A$ be the path graph of $\cN'$ on $A$. If $a, b\not\in A$, then $G_A=G'_A$, so $\cN_A=\cN'_A$. If $a\in A$ and $b\not\in A$, then, up to suppressing $p_a$, we have $G_A=G'_A$. Thus $\cN_A=\cN'_A$. Therefore~(i) holds and so, for the remainder of the proof, we may assume that $b\in A$, in which case $(p_a, p_b)$ is an arc of $G_A$.

Let $G^0_A = G_A$, and let $H^0_A$ be the directed graph obtained from $G_A$ by deleting $(p_a, p_b)$. Note that if $a\in A$, then $G'_A$ can be obtained from $H^0_A$ by suppressing $p_a$ and $p_b$. Furthermore, if $a\not\in A$, then $G'_A$ can be obtained from $H^0_A$ by \blue{suppressing $p_b$ and} repeatedly deleting non-leaf vertices with out-degree zero.

Suppose that $u_0$ is a vertex of $G^0_A$ with in-degree one and out-degree one, but $u_0\neq p_a$. Note that $u_0\neq p_b$. In constructing $H^0_A$ from $G^0_A$, the only vertices whose degrees changed were $p_a$ and $p_b$. Therefore, $u_0$ also has in-degree one and out-degree one in $H^0_A$. Construct $G^1_A$ and $H^1_A$ from $G^0_A$ and $H^0_A$, respectively, by suppressing $u_0$ and deleting exactly one arc of any resulting pair of parallel arcs. Observe that if an arc in parallel is deleted, then it is not incident with $p_a$ or $p_b$. Furthermore, $H^1_A$ can be obtained from $G^1_A$ by deleting $(p_a, p_b)$, and that $\cN'_A$ can be obtained from $H^1_A$ by repeatedly deleting any non-leaf vertices with out-degree zero until there are no such vertices, and then taking the full simplification of the resulting directed graph.

Now iteratively repeat this process. That is, for $i\ge 1$, suppose that $u_i$ is a vertex of $G^i_A$ with in-degree one and out-degree one, but $u_i\neq p_a$. Construct $G^{i+1}_A$ and $H^{i+1}_A$ from $G^i_A$ and $H^i_A$, respectively, by suppressing $u_i$ and deleting exactly one arc of any resulting pair of parallel arcs. In general, $H^i_A$ can be obtained from $G^i_A$ by deleting $(p_a, p_b)$, and $\cN'_A$ can be obtained from $H^i_A$ by repeatedly deleting any non-leaf vertices with out-degree zero until there are no such vertices, and then taking the full simplification of the resulting directed graph. Eventually, after, say $k$, iterations, we construct $G^k_A$ and $H^k_A$ where, except possibility $p_a$, there is no vertex of $G^k_A$ with in-degree one and out-degree one.

If $a\in A$, then $p_a$ does not have in-degree one and out-degree one in $G^k_A$, so $G^k_A$ has no vertices of in-degree one and out-degree one \blue{(and thus, no pair of parallel arcs)}. Therefore $G^k_A=\cN_A$. Thus, as $H^k_A$ is obtained from $G^k_A$ by deleting $(p_a, p_b)$ and $a\in A$, it follows that $\cN'_A$ is obtained from $\cN_A$ by deleting $(p_a, p_b)$ and suppressing $p_a$ and $p_b$. Hence~(ii) holds. Therefore we may now assume $a\not\in A$.

Since $a\not\in A$, the vertex $p_a$ has in-degree one and out-degree one in $G^k_A$, and $p_a$ has out-degree zero in $H^k_A$. Let $p$ be the parent of $p_a$ in $G^k_A$. Construct $G^{k+1}_A$ from $G^k_A$ by suppressing $p_a$, and construct $H^{k+1}_A$ from $H^k_A$ by deleting $p_a$. Observe that $H^{k+1}_A$ can be obtained from $G^{k+1}_A$ by deleting $(p, p_b)$. If $G^{k+1}_A=\cN_A$, then $p_b$ is a vertex of $G^{k+1}_A$, and $H^{k+1}_A$ can be obtained from $\cN_A$ by deleting $(p, p_b)$. Therefore, $\cN'_A$ can be obtained from $\cN_A$ by deleting $(p, p_b)$, repeatedly deleting non-leaf vertices of out-degree zero until there are no such vertices, and then taking the full simplification of the resulting directed graph. Thus~(iv) holds.

If $G^{k+1}_A\neq \cN_A$, then $G^{k+1}_A$ has either a vertex of in-degree one and out-degree one, or a pair of parallel arcs. By the construction of $G^{k+1}_A$, the only possibility is that $p$ has a pair of outgoing parallel arcs to $p_b$. In this case, $\cN_A$ is obtained from $G^{k+1}_A$ by deleting one of these arcs to $p_b$ and suppressing $p$ and $p_b$. Since $H^{k+1}_A$ is obtained from $G^{k+1}_A$ by deleting $(p, p_b)$, it follows that $\cN_A$ is obtained from $H^{k+1}_A$ by suppressing $p$ and $p_b$. Hence $\cN_A=\cN'_A$, thereby establishing~(iii) and completing the proof of the lemma.
\end{proof}

\section{Proof of Theorem~\ref{main}}
\label{proof}

This section consists of the proof of Theorem~\ref{main}. We begin by first establishing Theorem~\ref{main}(i).

\begin{proof}[Proof of Theorem~\ref{main}(i)]
Let $\cN$ be an orchard network on $X$, where $|X|\ge 3$, and let $\cN_0$ be a recoverable phylogenetic network on $X$ such that, up to isomorphism, $Tn(\cN_0) = Tn(\cN)$. The proof is by induction on the sum of the number $n$ of leaves and the number $r$ of reticulations of $\cN$. If $r=0$, then $\cN$ is a phylogenetic tree and $Tn(\cN)$ consists of all rooted triples exhibited by $\cN$. Thus, by~\cite[Theorem~6.4.1]{sem03}, Theorem~\ref{main}(i) holds.
Furthermore, if $n=3$, then $\cN$ exhibits exactly one trinet. By Lemma~\ref{orchard_recoverable}, orchard networks are recoverable, and so $\lsa(X)$ is the root of $\cN$. Therefore this trinet is $\cN$ itself. Since, up to isomorphism, $Tn(\cN)=Tn(\cN_0)$ and $\cN_0$ is recoverable, it follows that $\cN\cong \cN_0$.

Now suppose that $n\ge 4$ and $r\ge 1$, so $n+r\ge 5$, and that the theorem holds for all orchard networks in which the sum of the number of leaves and the number of reticulations is at most $n+r-1$. Since $\cN$ is orchard, $\cN$ has either a cherry, say $\{a, b\}$, or a reticulated cherry, say $(a, b)$. \blue{Up to isomorphism,} $Tn(\cN)=Tn(\cN_0)$ \blue{and so, by Lemma~\ref{trinet_cherry_lemma}}, either $\{a, b\}$ is a cherry or $(a, b)$ is a reticulated cherry of $\cN_0$, respectively. Let $\cN'$ and $\cN_0'$ be the phylogenetic networks obtained from $\cN$ and $\cN_0$, respectively, by reducing $b$ or cutting $(a, b)$. By Lemma~\ref{order}, $\cN'$ is orchard and, by Lemma~\ref{cherry_pick_recoverable}, $\cN_0'$ is recoverable.

First suppose that $\{a, b\}$ is a cherry of $\cN$ and $\cN_0$. By Lemma~\ref{cherry_construct_trinet}, $Tn(\cN')$ and $Tn(\cN'_0)$ are obtained from $Tn(\cN)$ and $Tn(\cN_0)$, respectively, by excluding those trinets whose leaf set contains $b$. Therefore, up to isomorphism, as $Tn(\cN)=Tn(\cN_0)$, \blue{we have} $Tn(\cN')=Tn(\cN_0')$. Thus, by the induction assumption, $\cN'\cong \cN_0'$. Since $\{a, b\}$ is a cherry of $\cN$ and $\cN_0$, we deduce that $\cN\cong \cN_0$.


Now suppose that $(a, b)$ is a reticulated cherry of $\cN$ and $\cN_0$. We will use Lemma~\ref{ret_construct_trinet} to show that the trinets \blue{exhibited by} $\cN'$ can be determined from the trinets \blue{exhibited by} $\cN$. The same argument will also show that the trinets \blue{exhibited by} $\cN'_0$ can be determined from the trinets \blue{exhibited by} $\cN_0$ in the same way. Noting that the leaf set of $\cN'$ is $X$, let $A\subseteq X$, where $|A|=3$. If $b\not\in A$ or $a, b\in A$, then we can construct $\cN'_A$ from $\cN_A$ as described by Lemma~\ref{ret_construct_trinet}(i) and (ii), respectively. Thus we may assume that $b\in A$, but $a\not\in A$. Say $A=\{b, x, y\}$, where $a\not\in \{x, y\}$. Let $p_a$ and $p_b$ denote the parents of $a$ and $b$, respectively, in $\cN$. We next use the trinets exhibited by $\cN$ to determine whether or not $p_b$ is a vertex of $\cN_A$.

Consider $\cN_{\{a, b, x\}}$. By Lemma~\ref{trinet_cherry_lemma}, $(a, b)$ is a reticulated cherry of $\cN_{\{a, b, x\}}$, and so $p_b$ is a vertex of $\cN_{\{a, b, x\}}$. By Lemma~\ref{subset_phynet_lemma}, the phylogenetic network exhibited by $\cN$ on $\{b, x\}$ is also the phylogenetic network exhibited by $\cN_{\{a, b, x\}}$ on $\{b, x\}$. Thus we can construct $\cN_{\{b, x\}}$ from $\cN_{\{a, b, x\}}$. In particular, we can decide whether or not $p_b$ is a vertex of $\cN_{\{b, x\}}$ from $\cN_{\{a, b, x\}}$. Similarly, we can decide whether or not $p_b$ is a vertex of $\cN_{\{b, y\}}$ from $\cN_{\{a, b, y\}}$. If $p_b$ is a vertex of neither $\cN_{\{b, x\}}$ nor $\cN_{\{b, y\}}$, then, by Lemma~\ref{sup_ret_cherry}, $p_b$ is not a vertex of $\cN_A$, and so, by Lemma~\ref{ret_construct_trinet}(iii), $\cN'_A=\cN_A$. Therefore, we may assume there exists $z\in \{x, y\}$ such that $p_b$ is a vertex of $\cN_{\{b, z\}}$, \blue{in which case, by Lemma~\ref{sup_ret_cherry}}, $p_b$ is a vertex of $\cN_A$. Let $p_1$ and $p_2$ be the parents of $p_b$ in $\cN_A$. Recalling that $\cN'$ is obtained from $\cN$ by cutting $(a, b)$, to construct $\cN'_A$, we need to determine which of the arcs $(p_1, p_b)$ and $(p_2, p_b)$ to delete from $\cN_A$.

Construct $\cN_{\{b, z\}}$ from $\cN_{\{a, b, z\}}$ in the usual way but with the following modification. Initially mark $p_a$ in $\cN_{\{a, b, z\}}$. When suppressing a marked vertex, mark its parent. The end result is $\cN_{\{b, z\}}$ with one of the parents of $p_b$ marked. The arc from the marked parent to $p_b$ corresponds to a path in $\cN_{\{a, b, z\}}$ from the marked parent to $p_b$ through $p_a$, and thus the arc we want to delete. On the other hand, we can also construct $\cN_{\{b, z\}}$ as the phylogenetic network exhibited by $\cN_{\blue{A=\{b, x, y\}}}$ on $\{b, z\}$. In doing this, mark the vertex $p_1$. If a marked vertex is suppressed, mark its parent. We again get $\cN_{\{b, z\}}$ with a parent of $p_b$ marked, and can compare our two marked parents. By Lemma~\ref{ret_construct_trinet}(iv), if they are the same vertex, $\cN'_A$ is constructed from $\cN_A$ by deleting the arc $(p_1, p_b)$, repeatedly deleting vertices of out-degree zero, and taking the full simplification of the resulting directed graph. Otherwise, by Lemma~\ref{ret_construct_trinet}(iv) again, $\cN'_A$ is constructed from $\cN_A$ by deleting the arc $(p_2, p_b)$, repeatedly deleting vertices of out-degree zero, and taking the full simplification of the resulting directed graph.

We conclude that the trinets exhibited by $\cN'$ (resp.\ $\cN'_0$) can be determined from the trinets exhibited by $\cN$ (resp.\ $\cN_0$). Since, \blue{up to isomorphism,} $Tn(\cN)=Tn(\cN_0)$ and there is no difference in the way $Tn(\cN')$ and $Tn(\cN'_0)$ are determined from $Tn(\cN)$ and $Tn(\cN_0)$, respectively, it follows that, up to isomorphism, $Tn(\cN_0')=Tn(\cN')$. Therefore, by the induction assumption, $\cN'\cong \cN_0'$. To construct $\cN$ and $\cN_0$ from $\cN'$ and $\cN'_0$, respectively, we need to realise $(a, b)$ as a reticulated cherry. The only way this can be done for $\cN'$ (and similarly for $\cN'_0$) is by subdividing the arcs into $a$ and $b$ with new vertices $p_a$ and $p_b$, and then adding an arc from $p_a$ to $p_b$. Hence $\cN\cong  \cN_0$, and this completes the proof of Theorem~\ref{main}(i).
\end{proof}

\subsection{Algorithm}

Let $\cN$ be an orchard network on $X$, where $|X|\geq 3$. The inductive proof of Theorem~\ref{main} implies a recursive algorithm that takes $X$ and $Tn(\cN)$ as its input and returns an orchard network $\cN_0$ isomorphic to $\cN$. \blue{Called} {\sc Construct Orchard}, we next describe this algorithm and give its running time. The correctness of {\sc Construct Orchard} is essentially established in the proof of Theorem~\ref{main}(i), and so it is omitted.

\begin{enumerate}[1.]
\item If $Tn(\cN)$ consists of a single trinet, \blue{and so $|X|=3$}, then return this trinet.

\item Else $Tn(\cN)$ contains at least two trinets, and so $|X|\ge 4$. Find elements $a, b\in X$ such that either $\{a, b\}$ is a cherry of every trinet in $Tn(\cN)$ whose leaf set contains both $a$ and $b$, or $(a, b)$ is a reticulated cherry of every trinet in $Tn(\cN)$ whose leaf set contains both $a$ and $b$.

\item If $\{a, b\}$ is a cherry of every trinet in $Tn(\cN)$ whose leaf set contains both $a$ and $b$, do the following:
\begin{enumerate}[3.1]
\item Let $Tn'(\cN)$ denote the set of trinets obtained from $Tn(\cN)$ by removing every trinet whose leaf set contains $b$.

\item Apply \textsc{Construct Orchard} to input $X'=X-\{b\}$ and $Tn'(\cN)$, and construct $\cN_0$ from the returned orchard network $\cN_0'$ by subdividing the arc directed into $a$ with a new vertex $p_a$, and adjoining a new leaf $b$ to $p_a$ via a new arc $(p_a, b)$.

\item Return $\cN_0$.
\end{enumerate}

\item Else $(a, b)$ is a reticulated cherry of every trinet in $Tn(\cN)$ whose leaf set contains both $a$ and $b$.
\begin{enumerate}[4.1]
\item Let $Tn'(\cN)$ denote the set of trinets obtained from $Tn(\cN)$ by replacing each trinet $\cN_A\in Tn(\cN)$ in which $b\in A$ with the trinet $\cN'_A$ constructed as follows:
\begin{enumerate}[4.{1}.1]
\item If $a\in A$, construct $\cN'_A$ from $\cN_A$ by deleting the reticulation arc of $(a, b)$ and suppressing the two resulting vertices of in-degree one and out-degree one.

\item Else $A=\{b, x, y\}$ for some distinct $x, y\in X-\{a, b\}$. Set $p_x$ (resp.\ $p_y$) to be the parent of $a$ in $\cN_{\{a, b, x\}}$ (resp.\ $\cN_{\{a, b, y\}}$), and set $p'_x$ (resp.\ $p'_y$) to be the parent of $b$ in $\cN_{\{a, b, x\}}$ (resp.\ $\cN_{\{a, b, y\}}$). Create a new directed graph $G_x$ (resp.\ $G_y$) from $\cN_{\{a, b, x\}}$ (resp.\ $\cN_{\{a, b, y\}}$) by deleting $a$ and taking the full simplification. Each time $p_x$ (resp.\ $p_y$) is suppressed during this process, set $p_x$ (resp.\ $p_y$) to be the parent of the suppressed vertex instead.

\begin{enumerate}[4.{1}.{2}.1]
\item If neither $p'_x$ nor $p'_y$ is a vertex of $G_x$ and $G_y$, respectively, then choose $\cN'_A$ to be $\cN_A$.

\item Else there is an element $z\in \{x, y\}$ such that $p'_z$ is a vertex of $G_z$. Let $\{z, z'\}=\{x, y\}$. Denote the parent of $b$ in $\cN_A$ by $p_b$, and let $p_1$ and $p_2$ be the parents of $p_b$ in $\cN_A$. Create a new directed graph $G'_z$ from $\cN_A$ by deleting every vertex of $\cN_A$ whose only leaf descendant is $z'$ and taking the full simplification. Each time $p_1$ is suppressed during this process, set $p_1$ to be the parent of the suppressed vertex instead.

\item Compare $p_z$ and $p_1$ in the isomorphic directed graphs $G_z$ and $G'_z$. If $p_z$ and $p_1$ are the same vertex, then construct $\cN'_A$ from $\cN_A$ by deleting $(p_1, p_b)$, repeatedly deleting vertices of out-degree zero, and \blue{then} taking the full simplification. Else construct $\cN'_A$ from $\cN_A$ by deleting $(p_2, p_b)$, repeatedly deleting vertices of out-degree zero, and \blue{then} taking the full simplification.
\end{enumerate}
\end{enumerate}

\item Apply \textsc{Construct Orchard} to input $X$ and $Tn'(\cN)$, and construct $\cN_0$ from $\cN_0'$ by subdividing the arcs directed into $a$ and $b$ with new vertices $p_a$ and $p_b$, respectively, and adjoining $p_a$ and $p_b$ via a new arc $(p_a, p_b)$.

\item Return $\cN_0$.
\end{enumerate}
\end{enumerate}

We now consider the running time of {\sc Construct Orchard}.

\begin{proof}[Proof of Theorem~\ref{main}(ii)]
The algorithm takes as input a set $X$ and the set $Tn(\cN)$ of trinets of an orchard network $\cN$ on $X$. If $Tn(\cN)$ consists of a single trinet, then {\sc Construct Orchard} runs in constant time. If $Tn(\cN)$ contains at least two trinets, and so $|X|\ge 4$, the algorithm begins by finding a $2$-element subset $\{a, b\}$ of $X$ such that either $\{a, b\}$ is a cherry of every trinet in $Tn(\cN)$ whose leaf set contains both $a$ and $b$, or $(a, b)$ is a reticulated cherry of every trinet in $Tn(\cN)$ whose leaf set contains both $a$ and $b$. There at most $|X|^2$ choices for a $2$-element subset of $X$. Since there are $O(|X|^3)$ trinets and deciding if $\{a, b\}$ is a cherry, or $(a, b)$ or $(b, a)$ is a reticulated cherry of a trinet takes constant time, the running time of Step~2 of {\sc Construct Orchard} takes $O(|X|^5)$ time. Once such a $2$-element subset is found, the algorithm constructs a new set $Tn'(\cN)$ of trinets from $Tn(\cN)$. In the worst possible instance, the longest running part of this process is when, $(a, b)$ say, is a reticulated cherry of every trinet of $Tn(\cN)$ whose leaf set contains both $a$ and $b$, and Step~4.1 is invoked.

Let $V$ denote the vertex set of $\cN$. Now, $Tn'(\cN)$ is obtained from $Tn(\cN)$ by modifying the trinets $\cN_A$ of $Tn(\cN)$ whose leaf set contains $b$. Thus there are at most $|X|^2$ such trinets to consider. In terms of running time, the longest part of Step~4.1 is when $A=\{b, x, y\}$, where $a\not\in \{x, y\}$, and Step~4.1.2 is invoked. The directed graphs $G_x$ and $G_y$ take $O(|V|^2)$ time to construct from $\cN_{\{a, b, x\}}$ and $\cN_{\{a, b, y\}}$, respectively. After that, Step~4.1.2.1 takes constant time. If Step~4.1.2.2 is called, \blue{determining} $z$ takes constant time and constructing $G'_z$ from $\cN_A$, where $z\in \{x, y\}$, takes $O(|V|^2)$ time. In Step~4.1.2.3, the directed graphs $G_z$ and $G'_z$ are compared to decide whether $p_z$ and $p_1$ are the same vertex. This comparison takes $O(|V|^2)$ time and, regardless of the decision, the resulting construction of $\cN'_A$ takes $O(|V|^2)$ time. Hence the running time to complete Step~4.1 is $O(|X|^2|V|^2)$.

With Step~4.1 completed, Steps~4.2 and~4.3 \blue{each} take constant time. It follows that each iteration takes $O(|X|^5+|X|^2|V|^2)$ time. When recursing, the input to the recursive call is either a set $X'=X-\{b\}$ and a set $Tn'(\cN)$ of trinets of an orchard network on $|X|-1$ leaves and $r$ reticulations, or a set $X$ and a set $Tn'(\cN)$ of trinets of an orchard network on $|X|$ leaves and $r-1$ reticulations, where $r$ is the number of reticulations of $\cN$. Therefore the total number of iterations is $O(|X|+r)$. \blue{Since $|V|=2(|X|+r)-1$~\cite{mcd15}, it follows that the total number of iterations is} $O(|V|)$. Hence {\sc Construct Orchard} completes in
$$O(|V|(|X|^5+|X|^2|V|^2))$$
time, that is, in $O(|V|^6)$ time as $|X|\le |V|$. This completes the proof of Theorem~\ref{main}(ii).
\end{proof}

\section{An Example}
\label{example}

In this section, we show that the largest value of $k$ such that all recoverable level-$k$ phylogenetic networks $\cN$ are encoded by $Tn(\cN)$ is at most~$3$. Consider the two phylogenetic networks $\cN_1$ and $\cN_2$ on $\{x_1, x_2, x_3, x_4, x_5\}$ shown in Fig.~\ref{counterexample}. Both $\cN_1$ and $\cN_2$ are recoverable, but $\cN_1$ is not isomorphic to $\cN_2$ as the vertex which is an ancestor of $x_1$ and $x_2$ but no other leaves is a descendant of the parent of $x_5$ in $\cN_1$, but is not a descendant of the parent of $x_5$ in $\cN_2$.

\begin{figure}
	\centering
	\begin{subfigure} {0.5\textwidth}
		\centering
		\begin{tikzpicture}
		\draw (0,0) -- (-1,-1.5);
		\draw (-0.5,-0.75) -- (0,-1.5);
		\draw (-1,-1.5) -- (-1,-3);
		\draw (0,0) -- (1,-1.5);
		\draw (-1,-1.5) -- (1,-3);
		\draw (1,-1.5) -- (1,-3);
		\draw (-1,-3) -- (-1,-4);
		\draw (1,-3) -- (1,-4);
		
		\filldraw[white] (0,-2.25) circle (2pt);
		
		\draw (1,-1.5) -- (-1,-3);
		
		\filldraw[black] (0,0) circle (2pt);
		\filldraw[black] (-1,-1.5) circle (2pt);
		\filldraw[black] (-0.5,-0.75) circle (2pt);
		\filldraw[black] (0,-1.5) circle (2pt) node[label=below:$x_5$] {};
		\filldraw[black] (-1,-3) circle (2pt);
		\filldraw[black] (1,-3) circle (2pt);
		\filldraw[black] (1,-1.5) circle (2pt);
		\filldraw[black] (-1,-4) circle (2pt) node [label=below:$x_i$] {};
		\filldraw[black] (1,-4) circle (2pt) node [label=below:$x_j$] {};
		\end{tikzpicture}
		\subcaption{Trinet exhibited by $\cN_1$ on $\{x_i, x_j, x_5\}$ for all $\{i, j\}\subseteq \{1, 2, 3, 4\}$.}
	\end{subfigure}
	\begin{subfigure} {0.45\textwidth}
		\centering
		\begin{tikzpicture}
		\draw (0,0) -- (-1,-1.5);
		\draw (0.5,-0.75) -- (0,-1.5);
		\draw (-1,-1.5) -- (-1,-3);
		\draw (0,0) -- (1,-1.5);
		\draw (-1,-1.5) -- (1,-3);
		\draw (1,-1.5) -- (1,-3);
		\draw (-1,-3) -- (-1,-4);
		\draw (1,-3) -- (1,-4);
		
		\filldraw[white] (0,-2.25) circle (2pt);
		
		\draw (1,-1.5) -- (-1,-3);
		
		\filldraw[black] (0,0) circle (2pt);
		\filldraw[black] (-1,-1.5) circle (2pt);
		\filldraw[black] (0.5,-0.75) circle (2pt);
		\filldraw[black] (0,-1.5) circle (2pt) node[label=below:$x_5$] {};
		\filldraw[black] (-1,-3) circle (2pt);
		\filldraw[black] (1,-3) circle (2pt);
		\filldraw[black] (1,-1.5) circle (2pt);
		\filldraw[black] (-1,-4) circle (2pt) node [label=below:$x_i$] {};
		\filldraw[black] (1,-4) circle (2pt) node [label=below:$x_j$] {};
		\end{tikzpicture}
		\subcaption{Trinet exhibited by $\cN_2$ on $\{x_i, x_j, x_5\}$ for all $\{i, j\}\subseteq \{1, 2, 3, 4\}$.}
	\end{subfigure}
	\caption{The phylogenetic networks $\cN_1$ and $\cN_2$ as shown in Fig.~\ref{counterexample} exhibit, up to isomorphism, the same trinets on $\{x_i, x_j, x_5\}$, where $\{i, j\}\subseteq \{1, 2, 3, 4\}$.}
	\label{trinets_same_fig}
\end{figure}

Up to isomorphism, the path graphs of $\cN_1$ and $\cN_2$ on $\{x_1, x_2, x_3, x_4\}$ are the same. Therefore, any trinet of $Tn(\cN_1)$ and $Tn(\cN_2)$ on the same leaf set not containing $x_5$ are isomorphic. Furthermore, every other trinet of $\cN_1$ and $\cN_2$ is isomorphic to the trinet shown in Fig.~\ref{trinets_same_fig}(A) and~(B), respectively, where $\{i, j\}\subseteq \{1, 2, 3, 4\}$. Since the trinets in this figure are isomorphic, it follows that $\cN_1$ is not encoded by $Tn(\cN_1)$.

We end this section, \blue{and the paper}, with some remarks. For a non-negative integer $k$, a phylogenetic network $\cN$ is {\em level-$k$} if each biconnected component of $\cN$ has at most $k$ reticulations. It is shown in~\cite{ier14} that all recoverable (binary) level-$2$ networks are encoded by their sets of trinets. The authors comment that the approach taken to establish this \blue{uniqueness} result does not extend to level-$k$ networks, where $k\ge 4$. Curiously, the counterexample consists of two level-$4$ networks. This raises the question of whether a recoverable level-$3$ network is encoded by the set of trinets it exhibits.

\end{document}